\newcommand{\R}{\mathbb{R}}
\newcommand{\N}{\mathbb{N}}
\newcommand{\ur}[1]{\mathrm{#1}}
\newcommand{\ure}{\ur e}
  \renewcommand{\labelenumi}{(\roman{enumi})}
\newcommand{\eps}{\varepsilon}
\newcommand{\gt}{>}
\newcommand{\lt}{<}
\newcommand{\defs}{\coloneqq}
\newcommand{\sfed}{\eqqcolon}
\newcommand{\ra}{\rightarrow}
\newcommand{\nea}{\nearrow}
\newcommand{\ol}{\overline}
\newcommand{\dx}{\,\mathrm{d}x}
\newcommand{\ds}{\,\mathrm{d}s}
\newcommand{\drho}{\,\mathrm{d}\rho}
\newcommand{\ddt}{\frac{\mathrm{d}}{\mathrm{d}t}}
\newcommand{\embed}{\hookrightarrow}
\newcommand{\hp}{\hphantom}
\newcommand{\pe}{\mathrel{\hp{=}}}
\newcommand{\tmax}{T_{\max}}
\newcommand{\intom}{\int_\Omega}
\newcommand{\Ombar}{\ol \Omega}
\newcommand{\leb}[2][\Omega]{\ensuremath{L^{#2}(#1)}}
\newcommand{\sob}[3][\Omega]{\ensuremath{W^{#2, #3}(#1)}}
\newcommand{\con}[2][\Ombar]{\ensuremath{C^{#2}(#1)}}
\newcommand{\grad}{\nabla}
\newcommand{\laplace}{\Delta}
\newcommand{\ue}{u_\eps}
\newcommand{\ve}{v_\eps}
\newcommand{\tmaxeps}{T_{\max, \eps}}
\renewcommand{\L}[1]{{L^{#1}(\Omega)}}
\let\originalparagraph\paragraph
\renewcommand{\paragraph}[2][.]{\originalparagraph{#2#1}}
\renewenvironment{proof}[1][\proofname]{\par
  \pushQED{\qed}%
  \normalfont \topsep0\p@\relax
  \trivlist
  \item[\hskip\labelsep\scshape
  #1\@addpunct{.}]\ignorespaces
}{%
  \popQED\endtrivlist\@endpefalse
}
\newtheorem{base}{Base}[section]
\numberwithin{equation}{section}
\newtheorem{theorem}[base]{Theorem} \newtheorem*{theorem*}{Theroem}
\newtheorem{lemma}[base]{Lemma} \newtheorem*{lemma*}{Lemma}
 \newtheorem*{prop*}{Proposition}
 \newtheorem*{cor*}{Corollary}
 \newtheorem*{definition*}{Definition}
 \newtheorem*{example*}{Example}
 \newtheorem*{cond*}{Condition}
\newtheorem{remark}[base]{Remark} \newtheorem*{remark*}{Remark}
\title{Unboundedness phenomenon in a model of urban crime}
\author[1,2]{Mario Fuest\footnote{e-mail: fuest@ifam.uni-hannover.de, corresponding author}}
\author[2]{Frederic Heihoff\footnote{e-mail: fheihoff@math.upb.de}}
\affil[1]{Leibniz Universität Hannover, Institut für Angewandte Mathematik, Welfengarten 1, 30167 Hannover, Germany}
\affil[2]{Institut für Mathematik, Universität Paderborn, Warburger Str.~100, 33098 Paderborn, Germany}  
\date{}
\begin{document}
\maketitle

\KOMAoptions{abstract=true}
\begin{abstract}
  \noindent
  We show that spatial patterns (`hotspots') may form in the crime model
	\begin{equation}\label{prob:star}\tag{$\star$}
	\left\{\;
	\begin{aligned}
	u_{t} &= \tfrac{1}{\varepsilon}\Delta u - \tfrac{\chi}{\varepsilon} \nabla \cdot \left(\tfrac{u}{v} \nabla v \right) - \varepsilon uv, \\
	v_{t} &= \Delta v - v + u v,
	\end{aligned}
	\right.
	\end{equation}
  which we consider in $\Omega = B_R(0) \subset \mathbb R^n$, $R > 0$, $n \geq 3$ with $\eps > 0$, $\chi > 0$ and initial data $u_0$, $v_0$ with sufficiently large initial mass $m := \int_\Omega u_0$.
  More precisely, for each $T > 0$ and fixed $\Omega$, $\chi$ and (large) $m$, we construct initial data $v_0$ exhibiting the following unboundedness phenomenon: Given any $M>0$, we can find $\varepsilon > 0$ such that the first component of the associated maximal solution becomes larger than $M$ at some point in $\Omega$ before the time $T$. Since the $L^1$ norm of $u$ is decreasing, this implies that some heterogeneous structure must form.\\[0.5em]
  We do this by first constructing classical solutions to the nonlocal scalar problem 
  \[
  	w_t = \Delta w + m \frac{w^{\chi+1}}{\int_\Omega w^\chi}
  \]
  from the solutions to the crime model by taking the limit $\varepsilon \searrow 0$ under the assumption that the unboundedness phenomenon explicitly does not occur on some interval $(0,T)$. We then construct initial data for this scalar problem leading to blow-up before time $T$. As solutions to the scalar problem are unique, this proves our central result by contradiction.\\[0.5em]
  \textbf{Key words:} {chemotaxis; urban crime; singular limit; logarithmic sensitivity}\\
  \textbf{MSC (2020):} {35B36 (primary); 35B44, 35K40, 35K58, 91D10 (secondary)}
\end{abstract}

{\small \textbf{Note for second version:}
In the first version of this article, we studied a reduced version of the crime model where the term $-\eps uv$ in the first equation in \eqref{prob:star} is replaced by $0$.
The present second version includes this term and thus we removed the word “reduced” from the title.
Additionally, we incorporated several helpful comments by the anonymous reviewers, for which we are thankful.}

\section{Introduction}
In the present paper, we study properties of the crime model introduced in \cite{ShortStatisticalModelCriminal2008}, which belongs to the large class of taxis-type systems.
Such cross-diffusive systems have not only proven to be a fertile ground for the modeling of biological processes involving chemotaxis since their introduction as a tool in these contexts in the highly influential paper by Keller and Segel in 1970 (cf.\ \cite{KellerInitiationSlimeMold1970}), but have also found application in other fields of study, such as the analysis of patterns in urban crime (cf.\ \cite{ShortStatisticalModelCriminal2008}). In general, said models take approximately the form
\begin{equation}\label{eq:general_problem}
	\left\{\;
	\begin{aligned}
	u_{t} &= \grad \cdot ( D(u,v) \grad u) - \grad \cdot \left(S(u,v) \grad v \right) + f(u,v), \\
	v_{t} &= \laplace v + g(u,v),
	\end{aligned}
	\right.
\end{equation}
where the first equation models the spatial density of some entities, such as cells or criminals, whose otherwise random movement is affected by what is modeled in the second equation, which can be an abstract concept like spatial attractiveness in terms of criminal activity or something more concrete such as the concentration of a chemical compound. The term representing this $v$-directed movement mechanism and thus generally the term of central interest in the above system is $\grad \cdot \left(S(u,v) \grad v \right)$, the taxis term. The source terms $f$ and $g$ represent growth, decay and potential interactions between the model components not involving any derivatives.

The most prominent representative of this model class is the classical Keller--Segel system (\eqref{eq:general_problem} with $D(u, v) = 1$, $S(u, v) = u$, $f \equiv 0$ and $g(u, v) = - v + u$),
which has been suggested to model certain aggregation behaviors of slime mold (cf.\ \cite{KellerInitiationSlimeMold1970}). A natural and important question to ask is whether this system and variants thereof allow for classical solutions blowing up in finite time---which can be interpreted as the most drastic indication of pattern formation.
For the classical Keller--Segel system, affirmative answers have been given in \cite{HerreroVelazquezBlowupMechanismChemotaxis1997} and \cite{MizoguchiWinklerBlowupTwodimensionalParabolic} for the two-dimensional setting under a largeness condition on the initial mass and in \cite{WinklerFinitetimeBlowupHigherdimensional2013} for the higher dimensional case (see also  \cite{JagerLuckhausExplosionsSolutionsSystem1992} and \cite{NagaiBlowupRadiallySymmetric1995} for early results concerning finite-time blow-up in certain parabolic--elliptic simplifications of that system).
These findings are complemented by global existence results in one dimension (cf.\ \cite{OsakiYagiFiniteDimensionalAttractor2001}) and in two dimensions under a smallness condition for the initial mass (cf.\ \cite{NagaiEtAlApplicationTrudingerMoserInequality1997}).
While the analysis of global existence and blow-up of systems of type \eqref{eq:general_problem} forms a large part of the chemotaxis literature, many more aspects of such systems have also been studied. Instead of giving an overview of these results here, we choose to refer to the surveys \cite{BellomoEtAlMathematicalTheoryKeller2015} and \cite{LankeitWinklerFacingLowRegularity2019} instead.

\paragraph{The crime model} The system of equations we are interested in is the urban crime model (without inhomogeneous production terms) introduced in \cite{ShortStatisticalModelCriminal2008}; that is, we consider
\begin{equation}\label{problem}
\left\{\;
\begin{aligned}
u_{\eps t} &= \tfrac{1}{\eps}\laplace \ue - \tfrac{\chi}{\eps}\grad \cdot \left(\tfrac{\ue}{\ve} \grad \ve \right) - \varepsilon \ue \ve && \text{in } \Omega \times(0,\infty), \\
v_{\eps t} &= \laplace \ve - \ve + \ue \ve && \text{in } \Omega \times(0,\infty), \\
\grad \ue \cdot \nu &= 0, \;\; \grad \ve \cdot \nu = 0 && \text{on } \partial \Omega \times (0,\infty), \\
\ue(\cdot, 0) &= u_0, \;\; \ve(\cdot, 0) = v_0 && \text{in } \Omega 
\end{aligned}
\right.
\end{equation}
in $\Omega \defs B_R(0) \subset \R^n$, $n \ge 3$, $R > 0$, for all $\eps > 0$ and a fixed parameter $\chi > 0$.
Here, $\ue$ denotes the density of burglars whose movement is partially modelled to be random (term $\frac1\eps \Delta \ue$) in order to account for effects of `routine activities' (cf.\ \cite{FelsonRoutineActivitiesCrime1987}, \cite{CohenFelsonSocialChangeCrime1979}); that is, the observation that criminals generally choose targets nearby their usual locations and hence act locally.  However, they also partially orient their movement towards (the logarithm of) an abstract attractiveness field $\ve$ (term $-\frac{\chi}{\eps} \nabla \cdot (\frac{\ue}{\ve} \nabla \ve)$), which diffuses in space (term $\Delta \ve)$, decays over time (term $-\ve$) and, importantly, is increased whenever a crime happens; that is, when criminals arrive at an attractive spot (term $+\ue \ve$). Thereby, effects of `repeat and near-repeat victimization' (cf.\ \cite{ShortMeasuringModelingRepeat2009}, \cite{JohnsonNewInsightsSpatial1997}, \cite{WilsonBrokenWindows1982}) are modelled; that is, areas where a crime has happened nearby become more attractive, for instance because burglars are already familiar with the neighbourhood or because a large amount of broken windows may be perceived as a lack of care for criminal activity. Finally, the term $-\eps \ue \ve$ in the first equation models that a fraction of burglars stops its activity after a successful crime. We also note that the system derived in \cite{ShortStatisticalModelCriminal2008} also includes inhomogeneous production terms for both equations (which we have set to zero here), representing that new criminal agents are added over time and that certain areas are intrinsically attractive.

The factors in front of these terms reflect the strength of the corresponding effect; we have set several of them to $1$ for simplicity.
However, as we are in particular interested in spatial aggregation phenomena of solutions to \eqref{problem} when the evolution of $\ue$ is mostly determined by movement and much less so by the withdrawal of criminals,
we introduce a (small) parameter $\eps$. Moreover, we emphasize that the strength of the taxis term $\frac{\chi}{\eps}$ may be arbitrary small compared to the strength of diffusion $\frac1\eps$.

Before stating our main result, we first discuss known results of \eqref{problem} and related systems. Global existence of classical solutions to \eqref{problem} in one spatial dimension has been shown in \cite{RodriguezGlobalExistenceQualitative2019}, while similar results have been achieved in \cite{FreitagGlobalSolutionsHigherdimensional2018} and \cite{AhnEtAlGlobalWellposednessLogarithmic2021} for higher dimensions under certain smallness assumptions. In two dimensions and with a sufficiently strong nonlinear diffusion term in the first equation, it is possible to construct weak solutions as seen in \cite{RodriguezRelaxationNonlinearDiffusion2020}. Moreover, again in the two-dimensional setting, global generalized solutions have been constructed either assuming radial symmetry (cf.\ \cite{WinklerGlobalSolvabilityStabilization2019}) or the presence of a logistic source term in the first equation (cf.\ \cite{HeihoffGeneralizedSolutionsSystem2020}). Some of the mentioned existence results further incorporate some discussion of long time behavior and stabilization as the formation of hotspots (cf.\ \cite{BerestyckiExistenceSymmetricAsymmetric2014}, \cite{CantrellGlobalBifurcationSolutions2012}, \cite{TseHotspotFormationDynamics2016}) is of significant interest from an application perspective. To our knowledge, there are no blow-up results available for the crime model yet.

A well-known relative of \eqref{problem} is the Keller--Segel system with logarithmic sensitivity,
\begin{equation}\label{eq:problem:ks_log_sens}
\left\{\;
\begin{aligned}
u_t &= \laplace u - \chi \grad \cdot \left(\tfrac{u}{v} \grad v \right), \\
v_t &= \laplace v - v + u,
\end{aligned}
\right.
\end{equation}
whose main difference compared to \eqref{problem} is that the production term in the second equation is $+u$ instead of $+uv$, which makes it easier to obtain helpful a priori bounds as long as boundedness of $v$ is not yet guaranteed. We first mention the work that inspired the present paper, namely \cite{WinklerSingularLimit2021}, in which initial data are constructed that lead to a very similar unboundedness phenomenon in that system as obtained for \eqref{problem} in Theorem~\ref{th:main} below. Apart from this, the knowledge concerning this system seems to similarly focus on global existence results:
for the one-dimensional setting (cf.\ \cite{TaoLargetimeBehaviorParabolicparabolic2013}), under certain smallness conditions (cf.\ \cite{WinklerGlobalSolutionsFully2011}, \cite{LankeitNewApproachBoundedness2016}, \cite{AhnEtAlGlobalWellposednessLogarithmic2021}), for specific parameter correlations (cf.\ \cite{ZhangGlobalBoundedSolutions2020}) or by relaxing the solution concept (cf.\ \cite{WinklerGlobalSolutionsFully2011}, \cite{StinnerWinklerGlobalWeakSolutions2011}, \cite{LankeitGeneralizedSolutionConcept2017a}). Unlike the crime model, \eqref{eq:problem:ks_log_sens} can be simplified to a parabolic--elliptic system in a straightforward manner and the knowledge regarding that system is more complete: Not only are classical solutions known to exist under weaker conditions (cf.\ \cite{FujieSenbaGlobalExistenceBoundedness2015}, \cite{NagaiSenbaGlobalExistenceBlowup1998}), for sufficiently large values of $\chi$ solutions blowing up in finite-time have been constructed in \cite{NagaiSenbaGlobalExistenceBlowup1998}.

\paragraph{Main result}
Instead of trying to obtain further global existence results, in the present paper we take a different perspective and aim to show that \eqref{problem} features certain unboundedness phenomena.
However, as noted for instance in the survey \cite{LankeitWinklerFacingLowRegularity2019},
all techniques for constructing blow-up solutions of parabolic--parabolic systems of the form \eqref{eq:general_problem} seem to heavily make use of certain energy functionals---%
which are only known to exist in some special cases.
Still, one might hope to observe less drastic versions of pattern formation in parabolic--parabolic cross-diffusion systems without relying on energy functionals.
Indeed, apart from the already mentioned result \cite{WinklerSingularLimit2021} regarding logarithmic Keller--Segel systems,
for the Keller--Segel system with logistic source it has been found in \cite{WinklerEmergenceLargePopulation2017} that population densities may surpass so-called carrying capacities
(cf.\ also \cite{WinklerHowFarCan2014}, \cite{LankeitChemotaxisCanPrevent2015} for analogous results for parabolic--elliptic variants of that system),
which additionally shows that pattern formation on intermediate timescales of this kind is possible even for global classical solutions.

We now extend these results to the crime model \eqref{problem}.
That is, the main insight of this paper is that given initial data $u_0$ with sufficiently large mass, for any time $T > 0$ it is possible to construct initial data $v_0$ such that the associated solutions to (\ref{problem}) exhibit the following unboundedness property: The local solutions to (\ref{problem}) associated with the aforementioned initial data grow arbitrarily large in any $L^p(\Omega)$, $p > \frac{n}{2}$ at times before $T > 0$ as $\eps$ becomes small.   More precisely, we prove the following
\begin{theorem}\label{th:main} 
	Let $\chi > 0$, $n \ge 3$ and $\Omega \defs B_R(0) \subset \R^n$ with some $R > 0$. 

	There exists $m_0 > 0$ such that for each $T > 0$ we can construct initial data $v_0 \in W^{1,\infty}(\Omega)$ being positive in $\Ombar$ with the following property: For all nonnegative initial data $u_0 \in C^0(\overline{\Omega})$ with $\int_\Omega u_0 > m_0$, it is possible to find $(T_\eps)_{\eps \in (0, 1)} \subset (0, T]$ and $(\ue, \ve)_{\eps \in (0, 1)} \subset C^0(\overline{\Omega}\times[0,T_\eps)) \cap C^{2,1}(\overline{\Omega}\times(0,T_\eps))$ with $\ue \geq 0$ and $\ve > 0$ in $\Ombar \times [0, T_\eps)$ for $\eps \in (0, 1)$ such that $(\ue, \ve)$ is a classical solution to (\ref{problem}) in $\Ombar\times[0,T_\eps)$ for $\eps \in (0, 1)$ and that
	\begin{equation}\label{eq:th:main}
	\limsup_{\eps \searrow 0} \sup_{t\in(0,T_\eps)} \|\ue(\cdot, t)\|_\L{p} = \infty
	\qquad \text{for all $p > \frac n2$}.
	\end{equation}
	This further implies that, for each $M > 0$, there exist $\eps \in (0,1)$, $x \in \Omega$, $t \in (0,T_\eps)$ such that $\ue(x,t) > M$.
\end{theorem}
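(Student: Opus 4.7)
The overall strategy is a proof by contradiction combining the two auxiliary results announced in the abstract: the construction of a classical solution to the scalar problem $w_t = \laplace w + m w^{\chi+1}/\intom w^\chi$ as a singular limit as $\eps \sea 0$ of the solutions to \eqref{problem} (valid precisely \emph{when} \eqref{eq:th:main} fails), and the construction of initial data for that scalar problem yielding finite-time blow-up before a prescribed time. Given $T > 0$ and $u_0$ with $m \defs \intom u_0 > m_0$, I would first invoke the blow-up construction for the scalar problem with parameter $m$ to obtain $v_0 \in W^{1,\infty}(\Omega)$, positive on $\Ombar$, such that the associated scalar-problem solution blows up at some time $T_\star < T$.

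Suppose, contrary to \eqref{eq:th:main}, that there exist $p > n/2$, a constant $C > 0$, and a sequence $\eps_k \sea 0$ along which $\|u_{\eps_k}(\cdot, t)\|_{\L{p}} \le C$ uniformly in $t \in (0, T_{\eps_k})$. Combined with a standard extensibility criterion (from the local existence theory that will have been set up beforehand), this bound should be sufficient to conclude $T_{\eps_k} \ge T_\star + \delta$ for some fixed $\delta > 0$ with $T_\star + \delta < T$, so that all $u_{\eps_k}, v_{\eps_k}$ are defined on a common time interval. Parabolic regularity applied to the second equation of \eqref{problem} then promotes the $L^p$-bound on $u_{\eps_k}$ to higher-order estimates on $v_{\eps_k}$, uniform in $k$, together with a strict positive lower bound (obtainable e.g.\ by comparison). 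An Aubin--Lions type compactness argument should then give, after passing to a subsequence, convergence of $(u_{\eps_k}, v_{\eps_k})$ away from $t = 0$ to a limit $(u,v)$ in topologies strong enough for the taxis term. In the first equation the $\eps u_{\eps,t}$ term vanishes in the limit, and rewriting the flux as $v^\chi \grad(u v^{-\chi})$ combined with the Neumann condition yields $u = m v^\chi / \intom v^\chi$; substituting into the second equation then shows that $v$ is a classical solution of the scalar problem with initial datum $v_0$ (the $v$-component suffering no initial layer, since its evolution equation is regular in $\eps$) on the interval $[0, T_\star + \delta]$.

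This contradicts the choice of $v_0$, for which the scalar problem blows up at $T_\star < T_\star + \delta$, proving Theorem~\ref{th:main}; the final assertion (existence of $\eps$, $x$, $t$ with $\ue(x,t) > M$) then follows because any uniform $L^\infty$-bound on $\ue$ would, after possibly restricting to large $\eps$, contradict \eqref{eq:th:main} for $p$ large. The main obstacle is the singular-limit step itself: without a uniform bound on $u_{\eps_k,t}$ (only $\eps_k u_{\eps_k,t}$ is tight, and only in a negative-order norm), compactness of $(u_{\eps_k})$ has to be extracted indirectly from the algebraic relationship with $v_{\eps_k}$ enforced by the first equation in the limit, and the subsequent passage to the limit in the nonlinear taxis term $\grad \cdot (u_\eps v_\eps^{-1} \grad v_\eps)$ requires precise control of $v_{\eps_k}$ both from above and away from zero; these technicalities, together with the initial layer in $u_{\eps_k}$ at $t = 0$, constitute the most delicate part of the argument.
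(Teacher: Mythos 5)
Your proposal takes essentially the same approach as the paper: a proof by contradiction combining (i) a singular limit $\eps \searrow 0$, which under a uniform-in-$\eps$ $L^p$ bound on $\ue$ produces a classical solution of the scalar Neumann problem $w_t = \Delta w + m w^{\chi+1}/\intom w^\chi$ with $w(\cdot,0)=v_0$ (the paper's Lemma~\ref{lm:limit_reduction}), and (ii) a construction of initial data for which this scalar problem blows up before time $T$ when $m$ is large (the paper's Lemma~\ref{lm:w_arbitrarily_earily_blowup}). You also correctly identify the main technical ingredients of the first step: positivity and upper bounds for $\ve$, Aubin--Lions compactness, and the identification $u = m v^\chi/\intom v^\chi$, which the paper obtains via a test-function argument with (essentially) $\varphi = \ln u - \chi \ln v$, in the same spirit as the flux rewriting $v^\chi \nabla(u v^{-\chi})$ that you sketch.

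One point you should tighten up is the order of quantifiers. You construct $v_0$ \emph{after} fixing $u_0$, feeding $m = \intom u_0$ into the blow-up construction for the scalar problem. The theorem, however, requires $v_0$ to be chosen first, depending only on $T$ (and the universal $m_0$), and to work for \emph{all} $u_0$ with $\intom u_0 > m_0$; your proposal as written establishes the a priori weaker statement in which $v_0$ may depend on $\intom u_0$. In the paper's construction (Lemma~\ref{lm:w0}), the initial data $w_0$ is built from a parameter $M$ and is independent of $m$; the blow-up result Lemma~\ref{lm:w_arbitrarily_earily_blowup} is then invoked to pick $v_0 = w_0$ before $u_0$ is fixed. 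You should check how, after fixing $T$ and $w_0$, the comparison argument handles all $m > m_0$ simultaneously (note in particular the role of the threshold $m^{-2}$ in the definition of $T_0(w_0, m)$, which interacts with $m$). This is the only place where your reasoning, taken literally, diverges from the target statement rather than just from the paper's exposition.

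Finally, the last sentence of your proposal is slightly muddled: the deduction of the pointwise unboundedness needs neither ``restricting to large $\eps$'' nor ``$p$ large''; it is immediate from the fact that a uniform pointwise bound $\ue \le M$ would give $\sup_t \|\ue(\cdot,t)\|_{\leb p} \le M|\Omega|^{1/p}$ for every $\eps$, contradicting \eqref{eq:th:main} for any single $p > n/2$.
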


\begin{remark}
  As the mass of the first solution component is decreasing in time, Theorem~\ref{th:main} in particular shows that spatial patterns form for certain choices of parameters and initial data; largeness in $\leb\infty$ and (relative) smallness in $\leb1$ can only be obtained by heterogeneous structures. Rigorously establishing pattern formation further supports the assumptions made in \cite{ShortStatisticalModelCriminal2008} for deriving the model \eqref{problem}, as this indicates that the system is indeed capable of displaying the observed criminal hotspots.
\end{remark}

\begin{remark}
  Let us briefly state two key differences between Theorem~\ref{th:main} and \cite[Theorem~1.1]{WinklerSingularLimit2021},
  where a similar result has been obtained for the Keller--Segel system with logarithmic sensitivity \eqref{eq:problem:ks_log_sens},
  which differs from \eqref{problem} mainly due to its weaker production term $+\ue$ instead of $+\ue\ve$ in the second equation.
  
  First, instead of $\chi > \frac{n}{n-2}$, we merely need to require positivity of $\chi$; that is, for the crime model an arbitrary weak taxis term (when compared to the diffusion term) already suffices to obtain such an unboundedness phenomenon.
  Second, for reasons discussed at the end of the introduction, we need to additionally require largeness of $\intom u_0$.
\end{remark}

\paragraph{Approach and complications} 

Our approach can be neatly separated into two steps, which each present a standalone result, and is fairly closely based on the methods presented in \cite{WinklerSingularLimit2021} for a similar system. 

For the first step, which we cover in Section~\ref{sec:reduction}, we work under the assumption that, for given initial data $u_0$, $v_0$, the local solutions $(\ue, \ve)$ of (\ref{problem}) exist on at least some time interval $(0,T)$ and are uniformly bounded regarding their first component in some $L^p(\Omega)$ space with $p > \frac{n}{2}$ independent of the parameter $\eps$. Our ultimate goal in this section is to use this boundedness assumption to construct a weak solution to a limit version of (\ref{problem}) and then combine the components of said weak solution into a classical solution to the scalar Neumann problem associated with the partial differential equation
\begin{equation}\label{eq:approach:singular}
	w_t = \laplace w + m \frac{w^\alpha}{\int_\Omega w^\chi}
\end{equation}
on $(0,T)$ with initial data $w_0 = v_0$,  $\alpha = \chi + 1$ and $m = \int_\Omega u_0$. To do this, we begin by using our assumed bound for $\ue$ as a baseline to derive further a priori estimates for both solution components by way of either testing based or semigroup based methods. We then employ the compact embedding properties of certain function spaces, e.g.\ those ensured by the Aubin--Lions lemma, to gain a null sequence $(\eps_j)_{j\in\N}$, along which the solutions $(\ue, \ve)$ converge to a pair of functions $(u,v)$ in such way as to retain sufficient weak solution properties for $u$ and $v$ to be a weak solution to the limit version of (\ref{problem}) which is obtained by multiplying the first equation in (\ref{problem}) by $\eps$ and then setting $\eps = 0$. We then largely reuse but also slightly adapt the arguments presented in \cite{WinklerSingularLimit2021} to gain an explicit form for $u$ in terms of $v$ by first proving that the function $u$ has constant mass $m = \int_\Omega u_0$ and then essentially testing the first equation in the limit version of (\ref{problem}) with $\ln(u) - \chi \grad \ln(v)$. Replacing $u$ by its explicit formula in the weak formulation for the second subproblem and using some standard parabolic regularity theory then completes this section.

Section~\ref{sec:limit_blowup} focuses exclusively on solutions to the scalar equation (\ref{eq:approach:singular}) with $\alpha = \chi + 1$. We first establish the existence of unique classical solutions with an associated blow-up criterion independent of the construction method presented in the previous section. Our goal in this section is to construct (radial) initial data $w_0$ for each $T > 0$ such that the associated solution to (\ref{eq:approach:singular}) blows up before time $T$ given that $m$ was sufficiently large. As this would contradict our construction from the previous section, its core assumption must have been wrong, which directly implies Theorem~\ref{th:main}. Because achieving this blow-up result, which is arguably of some independent interest, is in many ways the central argument of this paper, let us give a brief overview over its associated methods, inspirations and challenges:

Our approach is inspired by the arguments presented in \cite{QuittnerSoupletSuperlinearParabolicProblems2019}, which in turn build on ideas going back to \cite{FriedmanMcLeodBlowupPositiveSolutions1985} and \cite{HuYinSemilinearParabolicEquations1995}, where the case $\alpha = \chi$ is discussed and blow-up is in fact achieved for sufficiently large values of $\chi$ without any condition on $m$. Similar to our inspirations, our first step in this section is the construction of a family of initial data candidates $w_0$, which already resemble a blow-up state as they are essentially smoothed versions of the singular function $|x|^{-\frac{2}{\chi}}$. Maybe somewhat counter-intuitively, our next step is the establishment of a uniform upper bound for the solutions associated with these initial data candidates. This bound is then in fact used to ensure a lower bound for the dampening term $k(t) \defs m \left(\int_\Omega w(\cdot, t) \right)^{-1}$ in an effort to make the solutions associated with our initial data available to a comparison with Neumann problems of the form $z_t = \laplace z + 2\lambda z^{\chi+1}$, which, for sufficiently large $\lambda$, can be shown to blow up as early as desired by choosing initial data sufficiently close to $|x|^{-\frac{2}{\chi}}$. It is this point, where our need for a condition on $m$ becomes apparent, as we need $k(t)$ not only to be bounded from below but to be specifically larger than $2\lambda$ for an appropriate $\lambda$. As we only have limited control over the actual bound for $w$, this appears to only be achievable by prescribing a sufficient lower bound $m_0$ for $m$. Notably and in contrast to the case $\alpha=\chi$ already considered in the literature, rescaling of the whole solution family does not help in this case as such a rescaling would also affect the lower bound for the parameter $\lambda$ in the same way.

The derivation of the aforementioned upper bound for $w$ is now the last missing piece of the puzzle and is again based on a comparison argument. This time we consider the function $J \defs w_r + \eta r w^q$, $q \in (1,\chi+1)$, with some appropriate $\eta$, which, if shown to be nonpositive on $(0,R/2)$, gives us our desired bound as our solutions are radially decreasing. While the derivation of the necessary parabolic equation for $J$ is very similar to the one in \cite{QuittnerSoupletSuperlinearParabolicProblems2019}, a key difficulty in our scenario stems from the need for nonpositive boundary information at $R/2$ as a prerequisite for the application of the maximum principle. While in the case of $\alpha = \chi$, this is a rather straightforward consequence of local boundedness of mass, we cannot rely on any such property for our solutions. Instead we need to employ a much more subtle argument in Lemma~\ref{lm:w_pw_local_bdd} to control the value of $w(R/2, \cdot)$. Combined with the negative upper bound derived in Lemma~\ref{lm:w_r_bdd} for $w_r(R/2, \cdot)$ and after potential adjustment of $\eta$, this then allows for the application of the maximum principle to gain our desired bound. This effectively closes the argument.

As seen above, our mass condition is of course an artifact of our methods first and foremost and not necessarily an intrinsic property of the equation. As such, it is certainly an interesting future question whether solutions to (\ref{eq:approach:singular}) always globally exist for $\alpha = \chi + 1$ and small $m$ or if a similar unboundedness result without the condition on $m$ can be achieved by different means.

Interestingly though from a structural point of view, one would perhaps not expect the same mass condition for both the cases $\alpha > \chi + 1$ and $\alpha \in (\chi, \chi + 1)$ as the need for it could been seen as a consequence of the unique scaling invariance property of the case $\alpha = \chi + 1$, which robs us of one degree of freedom in our choice of parameters. In fact, the case $\alpha > \chi + 1$ is quite trivial as, for sufficiently large initial mass $\int_\Omega w_0$, the $L^1(\Omega)$ norm of the associated solution already blows up as early as we desire for any $m$. Conversely in the case $\alpha \in (\chi, \chi + 1)$, rescaling a family of solutions essentially adjusts the value of $m$ in the equation to any value desired. This means that any largeness condition for $m$ should be achievable by such a rescaling without affecting blow-up times, while the remainder of our argument should rather easily translate. In this sense, the scenario discussed here represents an interesting boundary case.

\section{Reduction to the limit problem}\label{sec:reduction}

As already laid out in the introduction, this section will feature the first half of the core argument presented in this paper. As such keeping in mind that we want to prove our central result by contradiction, we will in this section only consider initial data, for which the blow-up type scenario outlined in Theorem~\ref{th:main} does not happen, or more specifically we work under the assumption that for some fixed initial data the solutions to (\ref{problem}), or more precisely their first components, do not blow-up in $L^p(\Omega)$, $p > \frac{n}{2}$ on some fixed time interval $(0,T)$ even as $\eps \searrow 0$. The central goal of this section is to then construct a solution $w$ to the scalar Neumann problem (\ref{eq:approach:singular}) from $u$ and $v$ following the approach described in the introduction.

We thus begin by presenting the following local existence result for the system (\ref{problem}) as a necessary prerequisite for the formulation of this section's main result.
\begin{lemma}\label{lm:local_exist}
	Let $\eps \in (0,1)$, $\chi > 0$, $n\geq 3$ and $\Omega \subset \R^n$ be a bounded convex domain with a smooth boundary. Further let $u_0 \in C^0(\overline{\Omega})$ with $u_0 \geq 0$ and $v_0 \in W^{1,\infty}(\Omega)$ with $v_0 > 0$ in $\Ombar$ be some initial data. 

	Then there exist $\tmaxeps \in (0,\infty]$ and uniquely determined functions
  \begin{align*}
    \ue &\in C^0(\overline{\Omega} \times[0,\tmaxeps)) \cap C^{2,1}(\overline{\Omega} \times(0,\tmaxeps)), \\
    \ve &\in \bigcup_{q > n} C^0([0,\tmaxeps); \sob1q) \cap C^{2,1}(\overline{\Omega} \times(0,\tmaxeps))
  \end{align*}
  that solve (\ref{problem}) classically and comply with the following blow-up criterion:
	\begin{equation}\label{eq:blowup_criterion}
	\text{If } \tmaxeps < \infty, \quad \text{ then } \quad \limsup_{t\nearrow \tmaxeps} \|\ue(\cdot, t)\|_\L{p} = \infty \quad \text{ for all } p > \frac{n}{2}.
	\end{equation}
	Further,
	\begin{equation}\label{eq:mass_bound}
	\int_\Omega \ue(\cdot, t) = \int_\Omega u_0 - \varepsilon\int_0^t\int_\Omega \ue(\cdot, s) \ve(\cdot, s) \ds \leq \int_\Omega u_0
	\end{equation}
	for all $t \in (0,\tmaxeps)$.
\end{lemma} 
\begin{proof}
	This kind of local existence result is the consequence of a standard contraction mapping argument, which is already well established in the literature concerning taxis systems of the type seen in (\ref{problem}). Therefore we refer to e.g.\ \cite{BellomoEtAlMathematicalTheoryKeller2015} instead of laying out the argument in full.
  Moreover, \eqref{eq:mass_bound} directly follows upon integrating the first equation in \eqref{problem} in time and space.

	We note that the blow-up criteria for similar systems in the literature generally also deal with the second solution component $\ve$ either blowing up in $W^{1,q}(\Omega)$, $q > n$, or its 
	infimum becoming arbitrarily small. As we will see in Lemma~\ref{lm:ve_lower_bound} and Lemma~\ref{lm:ve_w1q_bound} below, blow-up of $\ve$ in this way already necessitates the blow-up characterized in (\ref{eq:blowup_criterion}) and therefore we will not devote any time to this argument here.
\end{proof}

Given this existence result, we can now formulate the central lemma of this section as follows:

\begin{lemma}\label{lm:limit_reduction}
	Let $\chi > 0$ and $n \geq 3$ and $\Omega \subset \R^n$ be a bounded convex domain with a smooth boundary. Further let $u_0 \in C^0(\overline{\Omega})$ with $u_0 \geq 0$ and $v_0 \in W^{1,\infty}(\Omega)$ with $v_0 > 0$ in $\Ombar$ be some initial data. Let then $(\ue, \ve)$ be the unique maximally extended solutions to (\ref{problem}) on $\Ombar \times [0,\tmaxeps)$ for each $\eps \in (0,1)$ as constructed in Lemma~\ref{lm:local_exist}.

	If there exist $T \in (0,\infty)$, $\eps^\star \in (0,1)$ and some $p > \frac{n}{2}$ such that
	\begin{equation}\label{eq:assumption}
	\inf_{\eps\in(0,\eps^\star)}\tmaxeps > T \quad \text{ and } \sup_{\eps \in (0,\eps^*)} \sup_{t\in(0,T)} \|\ue(\cdot,t)\|_\L{p} < \infty, \tag{A}
	\end{equation}
  \renewcommand*{\theHequation}{notag.\theequation} 
	then there exists a positive classical solution $w \in C^0(\overline{\Omega}\times[0,T)) \cap C^{2,1}(\overline{\Omega}\times(0,T))$ of the system
	\begin{equation}\label{limit_problem}
	\left\{\;
	\begin{aligned}
	w_t &= \laplace w + m \frac{w^{\chi + 1}}{\int_\Omega w^\chi} && \text{in } \Omega \times (0,T), \\ 
	\grad w \cdot \nu &= 0 && \text{on } \partial \Omega\times(0,T), \\
	w(\cdot, 0) &= w_0 && \text{in }  \Omega
	\end{aligned} 
	\right.
	\end{equation} 
	in $\Ombar\times[0,T)$ with $w_0 \equiv v_0$ and $m \defs \int_\Omega u_0$.
\end{lemma}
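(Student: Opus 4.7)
The plan is to extract a limit $(u,v)$ of $(\ue, \ve)$ along a suitable null sequence $\eps_j \searrow 0$ that solves the formal $\eps = 0$ reduction of (\ref{problem}) in a weak sense, to exploit the resulting elliptic identity for $u$ in order to write $u$ explicitly in terms of $v$, and finally to eliminate the linear decay term in the $v$-equation via an exponential rescaling so as to arrive at (\ref{limit_problem}). The starting point is that, given (\ref{eq:assumption}) with $p > \tfrac{n}{2}$, standard heat-semigroup estimates applied to $v_{\eps t} = \laplace \ve - \ve + \ue \ve$ yield $\eps$-uniform control of $\ve$ in $L^\infty((0,T); W^{1,q}(\Omega))$ for some $q > n$ (precisely the content of Lemma~\ref{lm:ve_w1q_bound} announced above) as well as a strictly positive pointwise lower bound $\inf_{\Omega \times (0,T)} \ve \ge \eta(T) > 0$ (Lemma~\ref{lm:ve_lower_bound}); in particular, $\grad \ve / \ve$ is uniformly bounded. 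Reinserting this into the first equation of (\ref{problem}), rewritten as $\laplace \ue - \chi \grad \cdot (\ue \grad \ve / \ve) = \eps u_{\eps t}$ of divergence form with a controlled drift and a vanishing right-hand side, Moser-type iteration upgrades the uniform $L^p$ bound for $\ue$ to uniform $L^\infty((\tau, T) \times \Omega)$ bounds on every subinterval with $\tau \in (0,T)$, which in turn feeds back into higher parabolic regularity for $\ve$ and yields $\eps$-uniform control of $v_{\eps t}$ in a suitable dual space.

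With these estimates in hand, the Aubin--Lions lemma produces a null sequence $\eps_j \searrow 0$ along which $\ve \to v$ strongly in $L^2((0,T); L^2(\Omega))$ and pointwise a.e., while $\ue \rh^* u$ in $L^\infty_{\loc}((0,T]; L^\infty(\Omega))$. Passing to the limit in the weak formulations of both equations, and noting that $\eps_j u_{\eps_j t}$ tested against any smooth compactly supported function vanishes as $\eps_j \searrow 0$, one obtains $0 = \laplace u - \chi \grad \cdot (u \grad v / v)$ and $v_t = \laplace v - v + u v$ in the sense of distributions. Strong convergence of $\ue$ on positive time slices (required both for the product $\ue \ve$ and for the subsequent testing) follows by applying elliptic regularity slice by slice to the equation satisfied by $\ue$, exploiting the smallness of $\eps u_{\eps t}$ in an appropriate norm. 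Following the device introduced in \cite{WinklerSingularLimit2021}, I then test the limiting elliptic identity for $u$ with $\ln u - \chi \ln v$ (after a routine approximation accounting for the logarithmic singularity and the positivity of $u$ established along the way), arriving at
\[
\intom u(\cdot, t) \bigl|\grad(\ln u(\cdot, t) - \chi \ln v(\cdot, t))\bigr|^2 \dx = 0
\]
for a.e.\ $t \in (0,T)$, which forces $u(\cdot, t) = c(t) v(\cdot, t)^\chi$; the mass identity (\ref{eq:mass_conservation}) persists in the limit and fixes $c(t) = m / \intom v^\chi(\cdot, t)$.

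Substituting this representation into the $v$-equation produces $v_t = \laplace v - v + m v^{\chi+1}/\intom v^\chi$, and the change of variable $w(x,t) \defs \ure^t v(x,t)$ exactly cancels the $-v$ term while leaving the nonlocal quotient invariant (the two factors $\ure^{\chi t}$ cancel between numerator and denominator), yielding (\ref{limit_problem}) with $w_0 = v_0$ and the homogeneous Neumann boundary condition preserved. Positivity of $w$ is inherited from $v > 0$, and the classical regularity $w \in C^{2,1}(\Ombar \times (0,T))$ then follows from standard Schauder theory applied to the scalar $w$-equation, whose nonlocal coefficient $m / \intom w^\chi$ is Hölder continuous in $t$ by virtue of the $W^{1,q}$-bound and the positive lower bound transferred from $v$; continuity down to $t = 0$ follows from $v(\cdot, t) \to v_0$ in the original problem. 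The main technical obstacle is the second step: because the first equation of (\ref{problem}) degenerates as $\eps \searrow 0$, standard Aubin--Lions machinery does not directly produce strong convergence of $\ue$, and one must exploit the elliptic structure inherited in the limit together with the $\eps$-uniform regularity and strict positivity of $\ve$ to upgrade weak to strong convergence before the crucial identification $u = m v^\chi / \intom v^\chi$ via the logarithmic test function can be carried out rigorously.
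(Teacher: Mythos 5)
Your overall arc — uniform $\ve$ bounds, compactness, the logarithmic test function à la \cite{WinklerSingularLimit2021}, the substitution $w = \ure^t v$, and Schauder theory at the end — matches the paper's strategy, but the middle of your argument both takes a detour the paper avoids and leaves a real gap open.

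First, you assert that Moser iteration upgrades the $L^p$ bound from \eqref{eq:assumption} to an $\eps$-uniform $L^\infty_{\mathrm{loc}}$ bound for $\ue$. This is far from automatic: the parabolic Moser scheme for $\eps u_{\eps t} - \laplace \ue = -\chi\grad\cdot(\ue \grad\ve/\ve)$ produces inequalities of the form $\eps\,\sup_t\int_\Omega \ue^{p_k} + \int_0^T\int_\Omega|\grad \ue^{p_k/2}|^2 \le \dots$, in which the $\sup_t$ control degenerates as $\eps\searrow 0$, so the iteration as usually run does not close uniformly in $\eps$. You do not address this. Likewise, your mechanism for strong convergence of $\ue$ on time slices (``elliptic regularity slice by slice, exploiting the smallness of $\eps u_{\eps t}$'') presupposes an a priori bound on $u_{\eps t}$ that you have not established, and in any case only offers compactness in $x$ per fixed $t$, not a limit in $L^2(\Omega\times(0,T))$.

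Second, and more fundamentally, strong convergence of $\ue$ is \emph{not} needed, so this whole detour is unnecessary. In the paper's proof one derives a space-time bound $\int_0^T\int_\Omega|\grad\ue|^2 \le C$ (which you omit, even though you need $u\in L^2((0,T);W^{1,2}(\Omega))$ to make sense of the limit elliptic identity and to carry out the logarithmic testing) and then extracts only \emph{weak} $L^2$ convergence $\ue\rh u$, $\grad\ue\rh\grad u$. Every nonlinear term pairs this weak limit against something strongly convergent: $\int\frac{\ue}{\ve}\grad\ve\cdot\grad\varphi$ uses strong $L^2$ convergence of $\grad\ve/\ve\cdot\grad\varphi$ (from Aubin--Lions together with the uniform positive lower bound for $\ve$), and $\int \ue\ve\varphi$ uses strong $L^2$ convergence of $\ve\varphi$. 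So weak $L^2$ compactness of $\ue$ — available directly from the energy estimate and mass conservation — suffices, and the $L^\infty$/strong-convergence steps in your proposal are neither achieved nor required. You should replace that portion with the $L^2(W^{1,2})$ bound for $\ue$ and the $L^2(W^{2,2})$/$L^2(L^2)$ bounds for $\ve$, $v_{\eps t}$ that feed Aubin--Lions.

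The remaining pieces of your proposal (positivity and $W^{1,q}$ bound for $\ve$, identification $u = m v^\chi/\intom v^\chi$ via the approximate logarithmic test function, cancellation of the $-v$ term under $w=\ure^t v$, and parabolic regularity to reach $C^{2,1}$) are consistent with the paper's argument, though for the last step the paper passes first through interior Hölder estimates (Porzio--Vespri) before applying classical theory.
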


For the remainder of this section, we now fix $\chi > 0$, $n \geq 3$ and a bounded convex domain $\Omega \subset \R^n$ with a smooth boundary. We further fix initial data $u_0 \in C^0(\overline{\Omega})$ with $u_0 \geq 0$ and $v_0 \in W^{1,\infty}(\Omega)$ with $v_0 > 0$ and the associated maximally extended solutions $(\ue, \ve)$ constructed in Lemma~\ref{lm:local_exist} on $\Ombar\times[0, \tmaxeps)$ for each $\eps \in (0,1)$. We further assume that there exist $T \in (0,\infty)$, $\eps^\star \in (0,1)$ and $p > \frac{n}{2}$ such that (\ref{eq:assumption}) holds. Lastly, we set $m \defs \int_\Omega u_0$ as a matter of convenience.

As our first step towards a proof of Lemma~\ref{lm:limit_reduction}, we will now derive sufficient bounds for the families $(\ue)_{\eps \in (0,\eps^\star)}$,  $(\ve)_{\eps \in (0,\eps^\star)}$ to allow us to find a suitable sequence $(\eps_j)_{j\in\N} \subset (0, \eps^\star)$, along which both families converge to weak solutions of their natural limit problems by using certain compact embeddings of function spaces.

We start by deriving a uniform lower bound for $\ve$. 
\begin{lemma}\label{lm:ve_lower_bound}
  Suppose there are $T \in (0,\infty)$, $\eps^\star \in (0,1)$ and $p > \frac{n}{2}$ such that \eqref{eq:assumption} holds.
	Then there exists $C > 0$ such that
	\[
	\inf_{x\in\Omega}\ve(x,t) \geq C 
	\]
	for all $t \in (0,T)$ and $\eps \in (0, \eps^\star)$.
\end{lemma}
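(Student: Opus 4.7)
The plan is to observe that this lemma actually does not require the assumption \eqref{eq:assumption} at all and follows from a direct parabolic comparison argument applied to the second equation of \eqref{problem}. Since $u_\eps \ge 0$ throughout $\Ombar \times [0, \tmaxeps)$ (by the parabolic maximum principle applied to the first equation, which is implicit in Lemma~\ref{lm:local_exist}), we may rewrite the $\ve$-equation as
\[
  v_{\eps t} - \laplace \ve + \ve \;=\; \ue \ve \;\ge\; 0
  \qquad \text{in } \Omega \times (0, \tmaxeps),
\]
noting that $\ve > 0$ also holds by the maximum principle since $v_0 > 0$ on $\Ombar$.

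The idea is then to construct a spatially constant subsolution. Put $\underline v(t) \defs \ure^{-t} \min_{\Ombar} v_0$, which is strictly positive because $v_0 \in W^{1,\infty}(\Omega)$ is positive on the compact set $\Ombar$. Then $\underline v_t - \laplace \underline v + \underline v = 0$ in $\Omega \times (0,\infty)$, and $\grad \underline v \cdot \nu = 0$ on $\partial\Omega$, while $\underline v(0) \le v_0$ in $\Ombar$. Applying the standard comparison principle for the linear parabolic operator $\partial_t - \laplace + 1$ with homogeneous Neumann data to the difference $\ve - \underline v$ (the zeroth order source term $\ue \ve \ge 0$ appears only on the $\ve$ side) yields
\[
  \ve(x,t) \;\ge\; \ure^{-t} \min_{\Ombar} v_0
  \qquad \text{for all } (x,t) \in \Ombar \times [0,\tmaxeps).
\]
Restricting to $t \in (0,T)$ and setting $C \defs \ure^{-T} \min_{\Ombar} v_0 > 0$ (which depends only on $T$ and $v_0$, not on $\eps$) then gives the claim.

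There is no real obstacle here; the only thing to be slightly careful about is citing (or briefly justifying) the comparison principle in the Neumann setting, but this is entirely standard for a linear parabolic equation in a smooth bounded domain. Given how robust the argument is, it is also worth remarking in passing that the conclusion in fact holds on the full existence interval $[0,\tmaxeps)$ of each individual solution, which will likely be convenient when invoking this bound in subsequent estimates (for instance when passing from $\ve$ to $\ln \ve$ or $\tfrac{1}{\ve}$).
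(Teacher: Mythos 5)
Your proof is correct and rests on exactly the same observation as the paper's: since $\ue\ve \ge 0$, the function $\ve$ is a supersolution of $\partial_t - \laplace + 1$, so it dominates $\ure^{-t}\inf_\Omega v_0$. The paper phrases this via the variation-of-constants formula and positivity of the Neumann heat semigroup, while you phrase it as a direct comparison with a spatially constant subsolution, but these are the same argument; your side remarks that (A) is not needed and that the bound persists on all of $[0,\tmaxeps)$ are both accurate.
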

\begin{proof}
	Using the variation-of-constants representation of $\ve$ and standard maximum-principle estimates for the Neumann heat semigroup $(\ure^{t\laplace})_{t\geq 0}$, we immediately see that
	\[
	\ve(\cdot, t) = \ure^{t(\laplace - 1)}v_0 + \int_0^t \ure^{(t-s)(\laplace - 1)} \ue(\cdot, s)\ve(\cdot, s) \ds \geq \ure^{t(\laplace - 1)}v_0 \geq \ure^{-t} \inf_{x\in\Omega} v_0(x) \geq \ure^{-T} \inf_{x\in\Omega} v_0(x)
	\]
	for all $t \in (0,T)$ and $\eps \in (0, \eps^\star)$. This completes the proof.
\end{proof}
As our next step, we will now establish a fairly strong upper bound for $\ve$ based on the bounds for $\ue$ provided to us by our assumption (\ref{eq:assumption}). As we do not have any mass conversation properties available to us for the second solution components, we will begin by first establishing an $L^2(\Omega)$ baseline bound by testing the second equation in (\ref{problem}) with $\ve$ and then applying Ehrling's lemma. We then use this baseline to establish a much stronger bound of type $W^{1,q}(\Omega)$ by applying well-known smoothing estimates for the Neumann heat semigroup $(\ure^{t\laplace})_{t > 0}$ to the variation-of-constants representation of $\ve$.
\begin{lemma}\label{lm:ve_w1q_bound}
  Suppose there are $T \in (0,\infty)$, $\eps^\star \in (0,1)$ and $p > \frac{n}{2}$ such that \eqref{eq:assumption} holds.
	Then there exist $q > n$ and $C > 0$ such that
	\[
		\|\ve(\cdot,t)\|_{W^{1,q}(\Omega)} \leq C
    \quad \text{and} \quad
		\|\ve(\cdot,t)\|_{L^\infty(\Omega)} \leq C
	\]
	for all $t \in (0,T)$ and $\eps \in (0,\eps^\star)$. 
\end{lemma}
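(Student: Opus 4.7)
The plan is to bootstrap from the subcritical $L^p$ control of $\ue$ to an $L^\infty$ control of $\ve$ and then to the desired $W^{1,q}$ estimate, in three steps: first an energy estimate giving an $L^2$ baseline, then an iterative semigroup bootstrap to $L^\infty$, and finally a single application of gradient smoothing.

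\textbf{Step 1 ($L^2$ baseline).} Testing the second equation in \eqref{problem} with $\ve$ and integrating by parts gives
\[
  \tfrac12 \ddt \intom \ve^2 + \intom |\grad \ve|^2 + \intom \ve^2 = \intom \ue \ve^2.
\]
Setting $p' \defs p/(p-1)$, Hölder's inequality and \eqref{eq:assumption} yield $\intom \ue \ve^2 \le C_1 \|\ve\|_{\L{2p'}}^2$. The assumption $p>\frac n2$ forces $p' < \frac{n}{n-2}$ and hence $2p' < \frac{2n}{n-2}$, so the embedding $W^{1,2}(\Omega)\embed L^{2p'}(\Omega)$ is compact. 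Ehrling's lemma then supplies, for any $\eta>0$, a constant $C_\eta>0$ with $\|\ve\|_{\L{2p'}}^2 \le \eta\|\grad \ve\|_{\L2}^2 + C_\eta \|\ve\|_{\L2}^2$. Choosing $\eta$ small enough to absorb the gradient term and applying Grönwall's inequality on $(0,T)$ produces a uniform bound $\sup_{t \in (0,T)} \|\ve(\cdot,t)\|_{\L 2} \le C$ for all $\eps \in (0,\eps^\star)$.

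\textbf{Step 2 (Moser-type bootstrap to $L^\infty$).} Using the variation-of-constants representation
\[
  \ve(\cdot, t) = \ure^{t(\laplace-1)} v_0 + \int_0^t \ure^{(t-s)(\laplace - 1)}(\ue \ve)(\cdot,s)\ds,
\]
and the standard $L^a$--$L^b$ smoothing estimates for the Neumann heat semigroup, I show inductively that if $\sup_{t \in (0,T)}\|\ve(\cdot,t)\|_{L^{p_k}(\Omega)}$ is uniformly bounded, then so is $\sup_{t \in (0,T)}\|\ve(\cdot,t)\|_{L^{p_{k+1}}(\Omega)}$ for any $p_{k+1}$ satisfying $\tfrac{n}{2}(\tfrac{1}{q_k}-\tfrac{1}{p_{k+1}}) < 1$, where $\tfrac{1}{q_k} = \tfrac{1}{p} + \tfrac{1}{p_k}$ comes from estimating $\ue \ve$ by Hölder. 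Since $p > \frac n2$, each iteration reduces $1/p_k$ by at least the positive amount $\tfrac{2}{n} - \tfrac{1}{p} > 0$; starting from $p_0 = 2$ from Step~1, after finitely many iterations one arrives at an index $k$ with $\tfrac{1}{p} + \tfrac{1}{p_k} < \tfrac{2}{n}$, at which point one step of the above scheme with $p_{k+1} = \infty$ delivers the uniform $L^\infty$ bound.

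\textbf{Step 3 ($W^{1,q}$ bound).} Once $\|\ve\|_{L^\infty(\Omega)}$ is uniformly controlled on $(0,T)$, the assumption \eqref{eq:assumption} gives a uniform $L^p(\Omega)$ bound on $\ue \ve$ with $p>\frac n2$. Applying the gradient smoothing estimate $\|\grad \ure^{t\laplace} f\|_{L^q(\Omega)} \le C(1 + t^{-\frac12 - \frac{n}{2}(\frac1p - \frac1q)})\|f\|_{L^p(\Omega)}$ to the variation-of-constants formula above and choosing $q > n$ small enough that $\frac12 + \frac{n}{2}(\frac1p - \frac1q) < 1$ (possible because $p > \frac n2$) makes the singularity in $s$ integrable, and yields the claimed uniform bound on $\|\ve(\cdot,t)\|_{W^{1,q}(\Omega)}$.

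The main obstacle is really in Step 1: the natural testing identity produces a supercritical-looking term $\intom \ue \ve^2$, and it is only thanks to the subcriticality built into \eqref{eq:assumption} through $p > \tfrac n2$ that $2p'$ lies strictly below the Sobolev exponent, allowing Ehrling-type absorption. Once this initial bound is secured, Steps 2 and 3 are standard semigroup arguments.
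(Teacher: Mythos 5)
Your overall strategy (energy baseline $\to$ Moser-type bootstrap to $L^\infty$ $\to$ one gradient-smoothing step) is a reasonable and more classical alternative to the paper's argument, which instead bootstraps the quantity $M_\eps(t) \defs \sup_{s\in(0,t)}\|\ve(\cdot,s)\|_{W^{1,q}(\Omega)}$ in a single step: there one interpolates $\|\ve\|_{\L{pr/(p-r)}} \le \|\ve\|_{\L\infty}^\alpha \|\ve\|_{\L2}^{1-\alpha}$ with $\alpha \in (0,1)$ and controls $\|\ve\|_{\L\infty}$ by $M_\eps$ via Sobolev embedding, arriving at the sublinear inequality $M_\eps \le K_6 + K_6 M_\eps^\alpha$, which closes without any iteration. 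Your Steps~1 and~3 are essentially sound.

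There is, however, a genuine gap in Step~2 for $n=3$. The very first iteration uses Hölder to place $\ue\ve$ in $L^{q_0}(\Omega)$ with $\tfrac{1}{q_0} = \tfrac1p + \tfrac1{p_0} = \tfrac1p + \tfrac12$, i.e.\ $q_0 = \tfrac{2p}{p+2}$, and then applies the $L^{q_0}$--$L^{p_1}$ semigroup smoothing estimate. These estimates require the source exponent to be at least $1$, but $q_0 < 1$ whenever $p < 2$, which happens precisely for $n=3$ and $p \in (\tfrac32, 2)$ --- a regime the lemma must cover. (For $n \ge 4$ one automatically has $p > \tfrac n2 \ge 2$ and the issue does not arise.) To repair this within your framework you would need a stronger baseline than $L^2$: for instance, testing the second equation with $\ve^{p'-1}$, $p' = \tfrac{p}{p-1}$, and running the same Ehrling/Grönwall argument for $\ve^{p'/2}$ yields a uniform bound for $\sup_{t\in(0,T)}\|\ve(\cdot,t)\|_{\L{p'}}$, which makes $\ue\ve$ at least $L^1$-bounded and lets the iteration start; alternatively, one can reintroduce a sublinear self-reference exactly as the paper does. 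As written, the proof does not cover the full admissible range of $p$.
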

\begin{proof}
	Using assumption (\ref{eq:assumption}), we start by fixing $K_1 > 0$ such that
	\[
	\|\ue(\cdot, t)\|_\L{p} \leq K_1
  \qquad \text{for all $t \in (0,T)$ and $\eps \in (0,\eps^\star)$}.
	\]
	Given this and setting $p' \defs \frac{p}{p-1}$, we can use Ehrling's lemma applied to the triple of spaces $W^{1,2}(\Omega)\hookrightarrow\hookrightarrow L^{2p'}(\Omega) \hookrightarrow L^2(\Omega)$ to fix $K_2 > 0$ such that
	\[
	\|\varphi\|^2_\L{2p'} \leq \frac{1}{K_1} \int_\Omega |\grad \varphi|^2 + K_2 \int_\Omega \varphi^2
  \qquad \text{for all $\varphi \in W^{1,2}(\Omega)$}.
	\]
	We note that this first embedding in the above triple is compact because $p > \frac n2$ entails $2p' = \frac{2p}{p-1} < \frac{2n}{n-2}$.

	To establish a baseline for later arguments, we test the second equation in (\ref{problem}) with $\ve$ to gain
	\begin{align*}
	\frac12 \ddt \int_\Omega \ve^2 &= -\int_\Omega |\grad \ve|^2 - \int_\Omega \ve^2 + \int_\Omega \ve^2 \ue \leq -\int_\Omega |\grad \ve|^2 + \|\ve\|^2_\L{2p'}\|\ue\|_\L{p} \\
	&\leq -\int_\Omega |\grad \ve|^2 + K_1\|\ve\|^2_\L{2p'} \leq K_1 K_2\int_\Omega \ve^2
	\end{align*}
	for all $t\in(0,T)$ and $\eps \in (0,\eps^\star)$. This then lets us fix $K_3 > 0$ such that
	\[
	\|\ve(\cdot, t)\|_\L{2} \leq K_3
	\]
	for all $t\in(0,T)$ and $\eps \in (0,\eps^\star)$ as it indicates that growth of the term is at most exponential.

	Because $p > \frac{n}{2}$, we can fix $r \in (\frac{n}{2}, p)$ such that $\frac{pr}{p-r} > 2$. This in turn implies that $\frac{nr}{(n-r)_+} > n$, allowing us to also fix $q \in (n, \frac{nr}{(n-r)_+})$. We further let $M_\eps(t) \defs \sup_{s\in(0,t)} \|\ve(\cdot,s)\|_{W^{1,q}} < \infty$ for all $t \in (0,T)$ and $\eps \in (0,1)$. Due to standard embedding properties of Sobolev spaces there then exists $K_4 > 0$ such that
	\begin{equation}\label{eq:ve_linf_sobolev_embedding}
		\sup_{s\in(0,t)} \|\ve(\cdot,s)\|_{L^\infty(\Omega)} \leq K_4 M_\eps(t) 
	  \qquad \text{for all $t \in (0,T) $ and $\eps \in (0,\eps^\star)$}.
	\end{equation}
	Using the well-known variation-of-constants representation of $\ve$ in combination with standard smoothing estimates for the Neumann heat semigroup $(\ure^{t\laplace})_{t \geq 0}$ (cf.\ \cite[Lemma~1.3]{WinklerAggregationVsGlobal2010}), we can estimate that
	\begin{align*}
	\|\ve(\cdot, t)\|_{W^{1,q}(\Omega)} &\leq \left\|\ure^{t(\laplace-1)}v_0 \right\|_{W^{1, q}(\Omega)} + \int_0^t \left\| \ure^{(t-s)(\laplace -  1)} \ue(\cdot, s) \ve(\cdot, s) \ds \; \right\|_{W^{1,q}(\Omega)} \\
	&\leq K_5 \|v_0\|_{W^{1,q}(\Omega)} + K_5\int_0^t (1 + (t-s)^{-\frac{1}{2} - \frac{n}{2}(\frac{1}{r} - \frac{1}{q})} )\|(\ue \ve)(\cdot, s) \|_\L{r} \ds
	\end{align*}
	for all $t \in (0,T)$ and $\eps \in (0,\eps^\star)$ with some appropriate $K_5 > 0$.
  Setting $\alpha \defs 1 - \frac{2(p-r)}{pr} \in (0,1)$, we further make use of Hölder's inequality and recall the defining properties of $K_1$, $K_3$ and $K_4$ to obtain
  \begin{align*}
          \sup_{s \in (0, t)} \|(\ue \ve)(\cdot, s) \|_\L{r}
    &\le  \sup_{s \in (0, t)} \|\ue(\cdot, s)\|_\L{p} \| \ve(\cdot, s) \|_\L{\frac{pr}{p-r}} \\
    &\le  \sup_{s \in (0, t)} \|\ue(\cdot, s)\|_\L{p} \|\ve(\cdot, s)\|_\L{\infty}^\alpha \| \ve(\cdot, s) \|^{1-\alpha}_\L{2}
    \le   K_1 K_3^{1-\alpha} K_4^\alpha M_\eps^\alpha(t)
  \end{align*}
	for all $t \in (0,T)$ and $\eps \in (0,\eps^\star)$.
  Moreover,
  \begin{align*}
        \int_0^t (1 + (t-s)^{-\frac{1}{2} - \frac{n}{2}(\frac{1}{r} - \frac{1}{q})})\ds
    \le \int_0^T (1 + s^{-\frac{1}{2} - \frac{n}{2}(\frac{1}{r} - \frac{1}{q})})\ds
    \lt \infty
  \end{align*}
	for all $t \in (0,T)$ since by our choice of $q$ we ensured that $\frac{n}{2}(\frac{1}{r} - \frac{1}{q}) < \frac{1}{2}$.
  Combining these estimates, we conclude that there is $K_6 \gt 0$ such that
	\[
		M_\eps(t) \leq K_6 + K_6 M_\eps^\alpha(t)
    \qquad \text{for all $t \in (0,T)$ and $\eps \in (0,\eps^\star)$},
	\]
	which together with (\ref{eq:ve_linf_sobolev_embedding}) is sufficient to complete the proof as $\alpha \in (0,1)$.
\end{proof}
Next, we want to establish some space-time integral bounds for the first solution components $\ue$. To do this, we test the first equation in (\ref{problem}) with $\eps\ue$ and then combine the resulting differential inequality with the already established bounds for $\ve$.
\begin{lemma}\label{lm:grad_ue2_sp_bound}
  Suppose there are $T \in (0,\infty)$, $\eps^\star \in (0,1)$ and $p > \frac{n}{2}$ such that \eqref{eq:assumption} holds.
	Then there exists $C > 0$ such that
	\[
	\int_0^T \int_\Omega |\grad \ue|^2 \leq C
	\]
	for all $\eps \in (0,\eps^\star)$.
\end{lemma}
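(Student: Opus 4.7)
The plan is to test the first equation of \eqref{problem} with $\ue$ itself. Using the Neumann boundary conditions, integration by parts yields the identity
\[
  \tfrac{\eps}{2} \ddt \intom \ue^2 + \intom |\grad \ue|^2 = \chi \intom \tfrac{\ue}{\ve} \grad \ve \cdot \grad \ue
\]
on $(0,T)$. Young's inequality on the right-hand side absorbs half of $\intom |\grad \ue|^2$ into the left side, leaving the term $\tfrac{\chi^2}{2} \intom \tfrac{\ue^2 |\grad \ve|^2}{\ve^2}$ to be controlled.

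To handle the remaining term, I would first invoke the uniform lower bound on $\ve$ from Lemma~\ref{lm:ve_lower_bound} and then apply Hölder's inequality with exponents $\tfrac{q}{q-2}$ and $\tfrac{q}{2}$, where $q \gt n$ is as provided by Lemma~\ref{lm:ve_w1q_bound}, to obtain
\[
  \intom \tfrac{\ue^2 |\grad \ve|^2}{\ve^2}
  \le C \|\ue\|_\L{\frac{2q}{q-2}}^2 \|\grad \ve\|_\L{q}^2
  \le C' \|\ue\|_\L{\frac{2q}{q-2}}^2.
\]
Since $q \gt n$, the exponent $\tfrac{2q}{q-2}$ lies strictly below the Sobolev critical value $\tfrac{2n}{n-2}$, so the Gagliardo--Nirenberg inequality interpolates $\ue$ between $L^1$ and $W^{1,2}$:
\[
  \|\ue\|_\L{\frac{2q}{q-2}}^2 \le C \bigl( \|\grad \ue\|_\L{2}^{2\theta} \|\ue\|_\L{1}^{2(1-\theta)} + \|\ue\|_\L{1}^2 \bigr),
\]
with interpolation parameter $\theta = \tfrac{1/2 + 1/q}{1/2 + 1/n}$, which is strictly below $1$ precisely because $q \gt n$.

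Now the mass conservation \eqref{eq:mass_conservation} fixes $\|\ue\|_\L{1} = m$, and since $2\theta \lt 2$, a further Young's inequality converts the $\|\grad \ue\|_\L{2}^{2\theta}$ factor into $\tfrac{1}{4} \|\grad \ue\|_\L{2}^2$ plus a constant that can be absorbed once more. The resulting inequality
\[
  \tfrac{\eps}{2} \ddt \intom \ue^2 + \tfrac{1}{4} \intom |\grad \ue|^2 \le K
\]
on $(0,T)$, when integrated in time, gives $\tfrac{1}{4} \int_0^T \intom |\grad \ue|^2 \le KT + \tfrac{\eps}{2} \intom u_0^2$, which is bounded uniformly in $\eps \in (0,\eps^\star)$ thanks to $u_0 \in C^0(\Ombar)$ and $\eps \lt 1$. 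The main obstacle is ensuring the interpolation exponent satisfies $\theta \lt 1$; this is exactly where the subcriticality $q \gt n$ (which ultimately traces back to the hypothesis $p \gt \tfrac{n}{2}$ in \eqref{eq:assumption} via Lemma~\ref{lm:ve_w1q_bound}) is decisive, as a borderline $q = n$ would prevent the Young step from closing.
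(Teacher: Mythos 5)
Your proof is correct and follows essentially the same approach as the paper: test the first equation with $\ue$, absorb via Young's inequality, apply H\"older with the $W^{1,q}$ bound and lower bound for $\ve$ from Lemmas~\ref{lm:ve_lower_bound} and~\ref{lm:ve_w1q_bound}, then control the subcritical $L^{\frac{2q}{q-2}}$ norm of $\ue$ by interpolation against $L^1$ using mass conservation, and integrate in time. The only cosmetic difference is that you invoke the Gagliardo--Nirenberg inequality together with a second Young step, whereas the paper packages the same interpolation into an application of Ehrling's lemma.
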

\begin{proof}
	Using Lemma~\ref{lm:ve_lower_bound} and Lemma~\ref{lm:ve_w1q_bound}, we can fix $q > n > 2$ and $K_1 > 0$  such that
	\[
	\left\|\frac{1}{\ve(\cdot, t)}\right\|_\L{\infty} \leq K_1 \quad \text{ and } \quad	\left\|\ve(\cdot, t)\right\|_{W^{1,q}(\Omega)} \leq K_1
	\]
	for all $t \in (0,T)$ and $\eps \in (0,\eps^\star)$. Since $1 < \frac{2q}{q-2} < \frac{2n}{n-2}$ and thus $\sob12 \embed \embed \leb{\frac{2q}{q-2}} \embed \leb1$,
  Ehrling's lemma combined with \eqref{eq:mass_bound} allows us to fix $K_2 > 0$ such that
	\[
	  K_1^4 \chi^2 \| \ue(\cdot, t) \|_\L{\frac{2q}{q-2}}^2 \leq \frac12 \|\grad \ue(\cdot, t)\|^2_\L{2} + K_2 
	  \qquad \text{for all $t \in (0,T)$ and $\eps \in (0,\eps^\star)$}.
	\]
	Testing the first equation in (\ref{problem}) with $\eps \ue$ and applying partial integration as well as Young's inequality yields
	\begin{align*}
	\frac{\eps}{2} \ddt \int_\Omega \ue^2 + \int_\Omega |\grad \ue|^2 &= \chi\int_\Omega  \frac{\ue}{\ve} \grad \ue \cdot \grad \ve  - \eps^2 \int_\Omega \ue^2 \ve \leq  \frac{1}{2} \int_\Omega |\grad \ue|^2 + \frac{\chi^2}{2} \int_\Omega \frac{\ue^2}{\ve^2} |\grad \ve|^2
  \qquad \text{in $(0, T)$},
	\end{align*}
	which when combined with Hölder's inequality implies 
	\begin{equation*}
          \eps \ddt \int_\Omega \ue^2 + \int_\Omega |\grad \ue|^2
    \le   \chi^2 \int_\Omega \frac{\ue^2}{\ve^2} |\grad \ve|^2
    \le   K_1^2 \chi^2 \left( \intom \ue^\frac{2q}{q-2} \right)^\frac{q-2}{q} \left( \intom |\grad \ve|^q \right)^\frac2q
    \le   \frac{1}{2} \int_\Omega |\grad \ue|^2 + K_2
	\end{equation*}
	for all $t \in (0,T)$ and $\eps \in (0,\eps^\star)$.
	Integrating with regards to the time variable then completes the proof.
\end{proof}
To eventually make the Aubin--Lions lemma applicable, we will derive one last set of a priori space-time bounds for $\ve$. To this end, we now test the second equation in (\ref{problem}) with $-\laplace \ve$ and $v_{\eps t}$.
\begin{lemma}\label{lm:ve_higher_sp_bounds}
  Suppose there are $T \in (0,\infty)$, $\eps^\star \in (0,1)$ and $p > \frac{n}{2}$ such that \eqref{eq:assumption} holds.
	Then there exists $C > 0$ such that
	\[
	\int_0^T\int_\Omega |\laplace \ve|^2 \leq C
  \quad \text{and} \quad
	\int_0^T\int_\Omega v_{\eps t}^2 \leq C
	\]
	for all $\eps \in (0,\eps^\star)$.
\end{lemma}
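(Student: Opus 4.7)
The plan is to test the second equation in \eqref{problem} separately with $-\laplace \ve$ and with $v_{\eps t}$. After integration by parts, for which the Neumann boundary condition on $\ve$ (and, upon differentiation in $t$, on $v_{\eps t}$) is essential, the first test leads to the identity
\[
\tfrac12 \ddt \intom |\grad \ve|^2 + \intom |\laplace \ve|^2 + \intom |\grad \ve|^2 = -\intom \ue \ve \laplace \ve
\]
on $(0, T)$, whereas the second yields
\[
\intom v_{\eps t}^2 + \tfrac12 \ddt \intom |\grad \ve|^2 + \tfrac12 \ddt \intom \ve^2 = \intom \ue \ve v_{\eps t}
\]
on $(0, T)$. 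Absorbing half of each of $\intom |\laplace \ve|^2$ and $\intom v_{\eps t}^2$ into the left-hand side by means of Young's inequality, both estimates reduce to uniformly controlling $\int_0^T \intom \ue^2 \ve^2$ in $\eps \in (0, \eps^\star)$.

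Since $\ve$ is uniformly bounded in $\L{\infty}$ by Lemma~\ref{lm:ve_w1q_bound}, it suffices to derive a uniform bound on $\int_0^T \intom \ue^2$. Here I would combine the mass conservation identity \eqref{eq:mass_conservation} with the Poincaré inequality applied to $\ue - \tfrac{1}{|\Omega|}\intom \ue$ to find $C \gt 0$ with
\[
\|\ue(\cdot, t)\|_\L{2}^2 \le C \|\grad \ue(\cdot, t)\|_\L{2}^2 + C m^2
\qquad \text{for all $t \in (0, T)$ and $\eps \in (0, \eps^\star)$}.
\]
Integrating in time and invoking the space-time gradient bound from Lemma~\ref{lm:grad_ue2_sp_bound} then delivers the required uniform estimate for $\int_0^T \intom \ue^2$ and hence for $\int_0^T \intom \ue^2 \ve^2$.

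Substituting back and integrating the two differential identities from $0$ to $T$, the boundary contributions $\intom |\grad v_0|^2$ and $\intom v_0^2$ are both finite thanks to $v_0 \in \sob1\infty$, so both claimed space-time bounds follow. The main subtlety of this plan lies in the control of $\int_0^T \intom \ue^2$: since assumption~\eqref{eq:assumption} only provides an $L^p$-bound with $p \gt \tfrac{n}{2}$, which may well be strictly less than $2$ when $n \ge 3$, a direct application of Hölder's inequality is insufficient, and the interplay between mass conservation and the already-established space-time gradient estimate for $\ue$ becomes essential.
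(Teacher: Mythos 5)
Your proposal is correct and follows essentially the same route as the paper: test the second equation with $-\laplace \ve$, absorb via Young, and reduce to controlling $\int_0^T\intom \ue^2\ve^2$ using the $L^\infty$-bound on $\ve$ from Lemma~\ref{lm:ve_w1q_bound}, a mass-conservation-based estimate of $\intom \ue^2$ by $\intom|\grad\ue|^2$ plus a constant, and the space-time gradient bound from Lemma~\ref{lm:grad_ue2_sp_bound}. The only (benign) variations are that you obtain the $L^2$-control of $\ue$ via Poincaré applied to $\ue - m/|\Omega|$ rather than via Ehrling's lemma with the triple $W^{1,2}\hookrightarrow\hookrightarrow L^2 \hookrightarrow L^1$ (these are equivalent here), and that you estimate $\intom v_{\eps t}^2$ by also testing with $v_{\eps t}$, whereas the paper reads it off directly by squaring the equation and using the already-obtained $|\laplace\ve|^2$ bound.
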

\begin{proof}
	Given Lemma~\ref{lm:ve_w1q_bound}, we can fix $K_1 > 0$ such that
	\[
	\|\ve(\cdot, t)\|_\L{\infty} \leq K_1
  \qquad \text{for all $t \in (0,T)$ and $\eps \in (0,\eps^\star)$}.
	\]
	Since $\sob12 \embed\embed \leb2 \embed \leb 1$, Ehrling's lemma in combination with the mass boundedness property (\ref{eq:mass_bound}) allows us to further fix $K_2 > 0$ such that
	\[
	\int_\Omega \ue^2 \leq \int_\Omega |\grad \ue|^2 + K_2
  \qquad \text{for all $t \in (0,T)$ and $\eps \in (0,\eps^\star)$}.
	\] 
	Testing the second equation in (\ref{problem}) with $-\laplace \ve$ and applying partial integration as well as Young's inequality immediately gives us
	\begin{align*}
	\frac{1}{2} \ddt \int_\Omega |\grad \ve|^2 + \int_\Omega |\laplace \ve|^2 + \int_\Omega |\grad \ve|^2 \leq -\int_\Omega \ue\ve \laplace \ve \leq \frac{1}{2}\int_\Omega |\laplace \ve|^2 + \frac{1}{2}\int_\Omega \ve^2 \ue^2.
	\end{align*}
	for all $t \in (0,T)$ and $\eps \in (0,1)$. Similarly,
	\begin{align*}
	\int_\Omega v_{\eps t}^2 = \int_\Omega (\laplace \ve - \ve + \ve\ue)^2  \leq 2\int_\Omega |\laplace \ve|^2 + 4\int_\Omega \ve^2 + 4\int_\Omega \ve^2\ue^2
	\end{align*}
	for all $t \in (0,T)$ and $\eps \in (0,1)$. Combining both of the above differential inequalities in an appropriate fashion and applying the prior established bounds and functional inequalities then yields
	\begin{align*}
	\ddt \int_\Omega |\grad \ve|^2 + \frac{1}{2}\int_\Omega |\laplace \ve|^2 + \frac{1}{4} \int_\Omega v_{\eps t}^2 &\leq \int_\Omega \ve^2 + 2\int_\Omega \ve^2 \ue^2  \\
	&\leq K_1^2|\Omega| + 2K_1^2\int_\Omega \ue^2\\
	&\leq 2K_1^2 \int_\Omega |\grad \ue|^2 + K_3 
	\end{align*}
	for all $t \in (0,T)$ and $\eps \in (0,\eps^\star)$ with $K_3 \defs K_1^2|\Omega| + 2K_1^2K_2$. Given the bound derived in Lemma~\ref{lm:grad_ue2_sp_bound}, time integration again yields our desired result.
\end{proof}
We have now derived all the bounds necessary to construct our desired functions $u$ and $v$ as limits of the functions $\ue$ and $\ve$, respectively, with convergence properties sufficiently strong to translate some (weak) solution properties from $\ue$ and $\ve$ to said limit functions. Note that the limit function $u$ will of course no longer solve a parabolic but an elliptic problem, which can be obtained by multiplying the first equation in (\ref{problem}) by $\eps$ and then setting $\eps$ to zero.
\begin{lemma}\label{lm:limit_functions}
  Suppose there are $T \in (0,\infty)$, $\eps^\star \in (0,1)$ and $p > \frac{n}{2}$ such that \eqref{eq:assumption} holds.
	Then there exist functions
	\begin{align*}
	u &\in L^2((0,T);W^{1,2}(\Omega)), \\
	v &\in L^2((0,T);W^{1,2}(\Omega)) \cap L^\infty(\Omega\times(0,T))
	\end{align*}
	with $\frac{1}{v} \in  L^\infty(\Omega\times(0,T))$
	and a null sequence $(\eps_j)_{j\in\N} \subset (0,\eps^\star)$ such that
	\begin{alignat}{2}
	\ue       & \rightharpoonup u       &&\qquad \text{ in } L^2(\Omega\times(0,T)) \label{eq:u_l2_conv}, \\
	\grad \ue & \rightharpoonup \grad u &&\qquad \text{ in } L^2(\Omega\times(0,T))\label{eq:grad_u_l2_conv}, \\
	\ve       & \rightarrow v           &&\qquad \text{ in } L^2(\Omega\times(0,T)) \text{ and a.e.\ in }  \Omega\times(0,T) \label{eq:v_l2_conv} \text{ and } \\
	\grad \ve & \rightarrow \grad v     &&\qquad \text{ in } L^2(\Omega\times(0,T)) \label{eq:grad_v_l2_conv}
	\end{alignat}
	as $\eps = \eps_j \searrow 0$. Furthermore, $u$ is a weak solution of the elliptic counterpart to the first equation in (\ref{problem}) in the sense that
	\begin{equation}\label{eq:u_weak_solution}
	\int_0^T\int_\Omega \grad u \cdot \grad \varphi = \chi \int_0^T\int_\Omega \frac{u}{v} \grad v \cdot \grad \varphi
	\end{equation}
	and $v$ is a weak solution to the second equation in (\ref{problem}) in the sense that
	\begin{equation}\label{eq:v_weak_solution}
	-\int_0^T\int_\Omega v \varphi_t - \int_\Omega v_0 \varphi(\cdot, 0) = -\int_0^T\int_\Omega \grad v \cdot \grad \varphi - \int_0^T \int_\Omega v\varphi + \int_0^T \int_\Omega v u \varphi
	\end{equation}
	for all $\varphi \in C^\infty(\overline{\Omega}\times[0,T))$ with compact support.
\end{lemma}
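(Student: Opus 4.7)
The plan is to extract a null sequence $\eps_j \searrow 0$ using the uniform a priori estimates from Lemmas~\ref{lm:ve_lower_bound}--\ref{lm:ve_higher_sp_bounds}, and then pass to the limit in test-function identities derived from \eqref{problem} to verify the two weak solution formulations.

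First I would collect the compactness ingredients. Combining \eqref{eq:mass_conservation} and Lemma~\ref{lm:grad_ue2_sp_bound} with a Poincaré--Wirtinger inequality applied about the constant mean $\frac{m}{|\Omega|}$ provides a uniform bound on $(\ue)_{\eps \in (0,\eps^\star)}$ in the reflexive space $L^2((0,T); W^{1,2}(\Omega))$, whereas Lemmas~\ref{lm:ve_lower_bound}, \ref{lm:ve_w1q_bound}, and \ref{lm:ve_higher_sp_bounds} furnish uniform bounds on $(\ve)$ in $L^2((0,T); W^{2,2}(\Omega)) \cap L^\infty(\Omega \times (0,T))$, on $(1/\ve)$ in $L^\infty(\Omega \times (0,T))$, and on $(v_{\eps t})$ in $L^2(\Omega \times (0,T))$. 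Banach--Alaoglu then yields a null sequence $(\eps_j)$ realizing \eqref{eq:u_l2_conv} and \eqref{eq:grad_u_l2_conv} simultaneously. Next, the Aubin--Lions lemma applied with the chain $W^{2,2}(\Omega) \embed\embed W^{1,2}(\Omega) \embed L^2(\Omega)$ upgrades a further subsequence to the strong convergences \eqref{eq:v_l2_conv}--\eqref{eq:grad_v_l2_conv}, and after one more subsequence extraction I may assume a.e.\ convergence as well. The bounds $v, 1/v \in L^\infty(\Omega \times (0,T))$ then follow from those on $\ve, 1/\ve$ through the pointwise limit.

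To establish \eqref{eq:u_weak_solution}, I would test the first equation in \eqref{problem} against $\varphi \in C^\infty(\overline{\Omega} \times [0,T))$ with compact support and integrate by parts in both space and time, using the Neumann boundary conditions, to obtain
\begin{align*}
  -\eps \int_0^T \int_\Omega \ue \varphi_t - \eps \int_\Omega u_0 \varphi(\cdot, 0)
    = -\int_0^T \int_\Omega \grad \ue \cdot \grad \varphi
      + \chi \int_0^T \int_\Omega \frac{\ue}{\ve} \grad \ve \cdot \grad \varphi.
\end{align*}
Along $\eps = \eps_j \searrow 0$ the left-hand side vanishes thanks to the uniform $L^2$ bound on $\ue$, the diffusion term passes to the limit by \eqref{eq:grad_u_l2_conv}, and for the taxis term I would combine the uniform $L^\infty$ bound on $1/\ve$ with $1/\ve \to 1/v$ a.e.\ via dominated convergence, together with \eqref{eq:grad_v_l2_conv}, to infer $\frac{1}{\ve} \grad \ve \cdot \grad \varphi \to \frac{1}{v} \grad v \cdot \grad \varphi$ strongly in $L^2(\Omega \times (0,T))$; the weak--strong convergence principle then closes the argument.

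Finally, \eqref{eq:v_weak_solution} follows by testing the second equation against $\varphi$ in the analogous fashion: the linear $\ve$-terms pass to the limit directly from \eqref{eq:v_l2_conv}--\eqref{eq:grad_v_l2_conv}, while for the product term $\int_0^T \int_\Omega \ue \ve \varphi$ I would observe that $\ve \varphi \to v \varphi$ strongly in $L^2(\Omega \times (0,T))$ (exploiting the uniform $L^\infty$ bound on $\ve$) and then pair this with the weak $L^2$ convergence of $\ue$. The main technical obstacle I anticipate is handling the nonlinear taxis and reaction terms despite only weak $L^2$ convergence of $\ue$; the resolution exploits precisely the uniform two-sided positivity bounds on $\ve$ from Lemmas~\ref{lm:ve_lower_bound} and \ref{lm:ve_w1q_bound}, which upgrade the $\ve$-dependent factors to strong $L^2$ convergence and thereby permit each weak--strong passage.
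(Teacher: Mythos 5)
Your proposal is correct and follows essentially the same route as the paper: uniform bounds from Lemmas~\ref{lm:ve_lower_bound}--\ref{lm:ve_higher_sp_bounds} together with mass conservation feed into weak compactness for $\ue$ and Aubin--Lions (via $W^{2,2}\embed\embed W^{1,2}\embed L^2$) for $\ve$, after which one passes to the limit in the tested equations. The one difference is cosmetic: the paper's proof simply asserts that the nonlinear terms converge given \eqref{eq:u_l2_conv}--\eqref{eq:grad_v_l2_conv}, whereas you spell out the weak--strong pairing (dominated convergence for $1/\ve \to 1/v$ combined with strong $L^2$ convergence of $\grad\ve$, and uniform $L^\infty$ boundedness of $\ve$ for the $u v$ term), which is the detail the paper leaves implicit.
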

\begin{proof}
	Due to the mass boundedness property (\ref{eq:mass_bound}) and Lemma~\ref{lm:grad_ue2_sp_bound} we know that the family $(\ue)_{\eps \in (0,\eps^\star)}$ is uniformly bounded in $L^2((0,T);W^{1,2}(\Omega))$. Noting that bounded sets in Sobolev spaces are already compact regarding the weak topology in the very same spaces, this already allows us to find a function $u \in L^2((0,T);W^{1,2}(\Omega))$ and a null sequence $(\eps_j)_{j\in\N} \subset (0, \eps^\star)$ such that (\ref{eq:u_l2_conv}) and (\ref{eq:grad_u_l2_conv}) hold as $\eps = \eps_j \searrow 0$ by a standard subsequence extraction argument.

	Further because of Lemma~\ref{lm:ve_w1q_bound} and Lemma~\ref{lm:ve_higher_sp_bounds} potentially combined with a standard elliptic regularity estimate (cf.\ \cite[Lemma 19.1]{FriedmanPartialDifferentialEquations1969}), we know that the family $(\ve)_{\eps \in (0,\eps^\star)}$ is uniformly bounded in $L^2((0,T);W^{2,2}(\Omega))$ and the family $(v_{\eps t})_{\eps \in (0,\eps^\star)}$ is uniformly bounded in $L^2((0,T);L^2(\Omega))$. Therefore, we can use the Aubin--Lions lemma (cf.\ \cite{TemamNavierStokesEquationsTheory1977}) applied to the triple of spaces $W^{2,2}(\Omega)\hookrightarrow\hookrightarrow W^{1,2}(\Omega)\hookrightarrow L^2(\Omega)$ to construct a function $v \in L^2((0,T);W^{1,2}(\Omega)$ such that (\ref{eq:v_l2_conv}) and (\ref{eq:grad_v_l2_conv}) hold as $\eps = \eps_j \searrow 0$ by replacing the previously established sequence $(\eps_j)_{j\in\N}$ with a subsequence extracted from it. Note that the almost everywhere pointwise convergence mentioned in (\ref{eq:v_l2_conv}) is not a direct consequence of the Aubin--Lions lemma, but can be established by potentially a third subsequence extraction as it is well-known that $L^2(\Omega\times(0,T))$ convergence implies a.e.\ pointwise convergence along a subsequence. It is then this pointwise convergence property combined with the uniform upper and lower bounds from Lemma~\ref{lm:ve_lower_bound} and Lemma~\ref{lm:ve_w1q_bound} that ensures that $v$ and $\frac{1}{v}$ are elements of $L^\infty(\Omega\times(0,T))$.

	Having now proven all necessary regularity and convergence properties for $u$ and $v$, we only need to verify that our desired weak solution properties hold as well. Testing with $\eps\varphi$ and integrating by parts shows that all $\ue$ are weak solutions to a variant of the first equation in (\ref{problem}) in the sense that 
	\begin{equation}\label{eq:ue_weak_solution}
	-\eps \int_0^T\int_\Omega \ue \varphi_t - \eps\int_\Omega u_0 \varphi(\cdot, 0) + \int_0^T\int_\Omega \grad \ue \cdot \grad \varphi = \chi \int_0^T\int_\Omega \frac{\ue}{\ve} \grad \ve \cdot \grad \varphi - \eps^2 \int_0^T\int_\Omega \ue \ve \varphi
	\end{equation}
	for all $\eps \in (0,1)$ and $\varphi \in C^\infty(\Ombar\times[0,T))$ with compact support. We now note that for fixed $\varphi$, the mass boundedness property (\ref{eq:mass_bound}) as well as the $L^\infty(\Omega)$ bound established in Lemma~\ref{lm:ve_w1q_bound} are already sufficient to ensure that the families $(|\int_0^T\int_\Omega \ue \varphi_t|)_{\eps \in (0, 1)}$, $(|\int_\Omega u_0 \varphi(\cdot, 0)|)_{\eps \in (0, 1)}$ and $(|\int_0^T\int_\Omega \ue \ve \varphi|)_{\eps \in (0, 1)}$ are uniformly bounded. Therefore, $\eps \int_0^T\int_\Omega \ue \varphi_t$, $\eps\int_\Omega u_0 \varphi(\cdot, 0)$ and $\eps^2 \int_0^T\int_\Omega \ue \ve \varphi$ converge to zero as $\eps \searrow 0$ for fixed $\varphi$. It is further easy to check, that the remaining terms converge to their counterparts without $\eps$ given the convergence properties in (\ref{eq:u_l2_conv})--(\ref{eq:grad_v_l2_conv}). As such it is sufficient to take the limit $\eps = \eps_j \searrow 0$ in (\ref{eq:ue_weak_solution}) to gain the weak solution property (\ref{eq:u_weak_solution}) for $u$. Similarly, each $\ve$ already fulfills the weak solution property (\ref{eq:v_weak_solution}) and it is again easy to verify that the convergence properties (\ref{eq:u_l2_conv})--(\ref{eq:grad_v_l2_conv}) are sufficient to retain said solution property as $\eps = \eps_j \searrow 0$.  
\end{proof}

It is a common approach when analyzing a parabolic equation coupled to an elliptic one, to essentially reduce the problem to a single parabolic equation if at all possible. As such, our next step toward our proof of Lemma~\ref{lm:limit_reduction} will be the derivation of an almost everywhere explicit formula for $u$ in terms of $v$, which can then be applied to the weak formulation of the second subproblem to achieve such a reduction.

To start, we adapt the argument used in \cite[Lemma 3.1]{WinklerSingularLimit2021} to show that the limit solution $u$ has constant mass $m$.
\begin{lemma}\label{lm:u_const_mass}
  Suppose there are $T \in (0,\infty)$, $\eps^\star \in (0,1)$ and $p > \frac{n}{2}$ such that \eqref{eq:assumption} holds
  and let $u$ be the function constructed in Lemma~\ref{lm:limit_functions}. Then there exists a null set $N\subset (0,T)$ such that 
  \[
    \int_\Omega u(\cdot,t) = m
  \]
  for all $t\in(0,T)\setminus N$.
\end{lemma}
\begin{proof}
  Using the convergence properties \eqref{eq:u_l2_conv} and \eqref{eq:v_l2_conv} from Lemma~\ref{lm:limit_functions}, we first note that (with $v$ as in Lemma~\ref{lm:limit_functions})
  \[
    \int_t^{t+\delta}\int_\Omega \ue \ve 
    \rightarrow  
    \int_t^{t+\delta}\int_\Omega u v
  \]
  and thus by \eqref{eq:mass_bound} that
  \begin{equation*}
    \int_t^{t+\delta}\int_\Omega \ue = \delta m - \eps \int_t^{t+\delta}\int_\Omega \ue \ve \rightarrow \delta m 
  \end{equation*}
  for all $\delta \in (0,1)$ and $t \in [0,T-\delta]$ as $\eps = \eps_j \searrow 0$. But as \eqref{eq:u_l2_conv} also implies that 
  \[
    \int_t^{t+\delta}\int_\Omega \ue 
    \rightarrow  
    \int_t^{t+\delta}\int_\Omega u
  \]
  for all $\delta \in (0,1)$ and $t \in [0,T-\delta]$ as $\eps = \eps_j \searrow 0$,
  we can conclude that 
  \begin{equation}\label{eq:u_space_time_mass_conservation}
    \int_t^{t+\delta}\int_\Omega u = \delta m
  \end{equation}
  for all $\delta \in (0,1)$ and $t \in [0,T-\delta]$ by the uniqueness property of limits. 
  \\[0.5em]
  Let now $N \subset (0,T)$ be the set of times $t \in (0,T)$ that are not Lebesgue points of the map $t \mapsto \int_\Omega u(\cdot, t)$. Using \eqref{eq:u_space_time_mass_conservation} combined with the Lebesgue point property, we then see that  
  \[
    \int_\Omega u(\cdot, t) = \lim_{\delta \searrow 0} \frac{1}{\delta}\int_t^{t+\delta}\int_\Omega u = m
  \]
  for all $t \in (0,T)\setminus N$. As further $u\in L^1((0,T); L^1(\Omega))$ implies that $N$ is a null set, this completes the proof.
\end{proof}
Having established all the necessary preliminaries, we now derive an explicit formula for $u$ in terms of $v$.

\begin{lemma}\label{lm:u_explicit_form}
  Suppose there are $T \in (0,\infty)$, $\eps^\star \in (0,1)$ and $p > \frac{n}{2}$ such that \eqref{eq:assumption} holds
  and let $u$, $v$ be the functions constructed in Lemma~\ref{lm:limit_functions}. Then there exists a null set $N \subset (0,T)$ such that 
	\begin{equation}\label{eq:u_explicit_form}
	u(\cdot,t) = m \frac{v^\chi(\cdot,t)}{\int_\Omega v^\chi(\cdot,t)} \qquad \text{a.e.\ in $\Omega$}
	\end{equation}
	for all $t \in (0,T)\setminus N$.
\end{lemma}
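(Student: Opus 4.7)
Following \cite{WinklerSingularLimit2021}, the strategy is to establish that $u(\cdot,t)\,v^{-\chi}(\cdot,t)$ is constant in $x$ for a.e.\ $t \in (0,T)$, and then to identify the constant via mass conservation. The starting observation is the formal identity
\[
  \nabla u - \chi \tfrac{u}{v}\nabla v \;=\; v^\chi\, \nabla(u v^{-\chi}) \;=\; u\, \nabla\!\left( \ln u - \chi \ln v \right),
\]
which, thanks to Lemmas~\ref{lm:ve_lower_bound} and \ref{lm:ve_w1q_bound} (so that $v$ is bounded above and bounded away from zero on $\Omega\times(0,T)$), is legitimate for the limit pair $(u,v)$ from Lemma~\ref{lm:limit_functions}.

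The main step is to derive the dissipation identity $\int_\Omega v^\chi |\nabla(u v^{-\chi})|^2 = 0$. Since $uv^{-\chi}$ is not directly admissible in \eqref{eq:u_weak_solution}, the plan is to work at the approximate level: for a nonnegative $\psi \in C_c^\infty((0,T))$ and $\delta > 0$, test the first equation of \eqref{problem} against $\psi(t)\,[\ln(\ue+\delta)-\chi\ln\ve]$. Integration by parts produces on the right a nonnegative dissipation whose $\delta \searrow 0$ limit is (up to positive factors) $\int_0^T \psi \int_\Omega v^\chi |\nabla(uv^{-\chi})|^2$, and on the left an $\eps$-prefactor term which rewrites as $\eps$ times the boundary values of an entropy-type functional $\intom [\ue \ln(\ue + \delta) - \chi \ue \ln \ve]$ plus a remainder of the form $\eps \chi \int \ue v_{\eps t}/\ve$, controllable via Lemmas~\ref{lm:ve_w1q_bound}--\ref{lm:ve_higher_sp_bounds} and mass conservation \eqref{eq:mass_conservation}. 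Passing first $\varepsilon = \varepsilon_j \searrow 0$ using \eqref{eq:u_l2_conv}--\eqref{eq:grad_v_l2_conv} and weak lower semicontinuity for the dissipation, and then $\delta \searrow 0$ via monotone/dominated convergence, yields
\[
  \int_0^T \psi(t) \int_\Omega v^\chi\, \bigl|\nabla(u v^{-\chi})\bigr|^2 \dx\dt = 0.
\]
Arbitrariness of $\psi$ and positivity of $v^\chi$ then provide a null set $N \subset (0,T)$ such that $\nabla(uv^{-\chi})(\cdot, t) = 0$ a.e.\ in $\Omega$ for every $t \notin N$, whence $u(\cdot,t) = c(t)\,v^\chi(\cdot,t)$ for some $c(t) \geq 0$.

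Finally, the constant $c(t)$ is identified by integration: the mass conservation \eqref{eq:mass_conservation} combined with the weak $L^2$ convergence \eqref{eq:u_l2_conv}, tested against time cutoffs, gives $\intom u(\cdot,t) = m$ for a.e.\ $t$, whence $c(t) = m/\int_\Omega v^\chi(\cdot,t)$, which is exactly \eqref{eq:u_explicit_form}. The principal technical obstacle is the rigorous justification of the regularized test-function argument---in particular, bounding the remainder $\eps \chi \int_0^T \psi \intom \ue v_{\eps t}/\ve$ uniformly in $\delta$ (via \eqref{eq:mass_conservation}, Lemma~\ref{lm:ve_lower_bound} and Lemma~\ref{lm:ve_higher_sp_bounds}) and then passing $\delta \searrow 0$ inside the nonlinear dissipation while only having weak $L^2$ convergence of $\nabla \ue$ at hand; for this last point, the a.e.\ pointwise convergence of $v_\varepsilon$ from \eqref{eq:v_l2_conv} is essential.
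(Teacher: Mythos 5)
Your high-level strategy—derive a dissipation identity that forces $\nabla(uv^{-\chi})(\cdot,t)\equiv 0$ and then use mass conservation to pin down the constant—is the same as the paper's. However, you are \emph{not} actually following \cite{WinklerSingularLimit2021} as you claim: the paper (and its reference) first passes $\eps\searrow 0$ to obtain the limit weak formulation \eqref{eq:u_weak_solution}, then separately establishes positivity of $u$ a.e.\ (their Lemma~3.6), and only then inserts a regularized version of $\ln u - \chi\ln v$ into \eqref{eq:u_weak_solution}---an approximation argument which is the sole place in the whole proof where convexity of $\Omega$ is needed. You instead regularize the test function \emph{already at the $\eps$-level}, take $\eps\searrow 0$ inside the dissipation and only afterwards send $\delta\searrow 0$. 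That is a genuinely different order of operations, and it is not a faithful rendering of the cited arguments.

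The proposal also contains concrete gaps that are not merely missing detail but false or imprecise steps. First, the assertion that integration by parts ``produces on the right a nonnegative dissipation'' is wrong for $\delta > 0$: the term
\[
  \Bigl[\nabla\ue - \chi\tfrac{\ue}{\ve}\nabla\ve\Bigr]\cdot\Bigl[\tfrac{\nabla\ue}{\ue+\delta} - \chi\tfrac{\nabla\ve}{\ve}\Bigr]
\]
has no sign in general. Writing $X\defs\nabla\ue - \chi\frac{\ue}{\ve}\nabla\ve$ and $Y\defs\frac{\nabla\ue}{\ue+\delta} - \chi\frac{\nabla\ve}{\ve}$, one has the identity $X = (\ue+\delta)Y + \chi\frac{\delta}{\ve}\nabla\ve$, so $X\cdot Y = (\ue+\delta)|Y|^2 + \chi\frac{\delta}{\ve}\nabla\ve\cdot Y$; only the first summand is nonnegative, and the second must be absorbed (e.g.\ via Young's inequality into $\frac12(\ue+\delta)|Y|^2 + O(\delta)$). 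You neither record this decomposition nor address the error. Second, the $\delta\searrow 0$ limit of the correctly signed piece is $\int u\,|\nabla\ln u - \chi\nabla\ln v|^2$, \emph{not} $\int v^\chi|\nabla(uv^{-\chi})|^2$; the two differ by the factor $u/v^\chi$, which is bounded above but not away from zero, so ``up to positive factors'' does not repair the discrepancy. The conclusion $\nabla(uv^{-\chi})(\cdot,t) = 0$ a.e.\ can still be salvaged by combining $\nabla\ln(u/v^\chi) = 0$ on $\{u>0\}$ with the Stampacchia fact that $\nabla u = 0$ a.e.\ on $\{u=0\}$, but you need to say so---this is precisely what substitutes for the positivity-of-$u$ step the paper invokes. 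Third, passing $\eps = \eps_j\searrow 0$ in $\int\psi\,(\ue+\delta)|Y|^2$ is the most delicate point: you only have weak $L^2$ convergence of $\ue$ and $\nabla\ue$, and the integrand depends nonlinearly on both. Weak lower semicontinuity does hold because $(s,\xi)\mapsto\frac{|\xi|^2}{s+\delta} - 2a\cdot\xi + (s+\delta)|a|^2 = \bigl|\frac{\xi}{\sqrt{s+\delta}} - \sqrt{s+\delta}\,a\bigr|^2$ is jointly convex and nonnegative, but you need to argue this; moreover the coefficient $a_\eps = \chi\nabla\ve/\ve$ itself varies with $\eps$, and replacing $a_\eps$ by $a$ produces a remainder involving $\ue(|a_\eps|^2 - |a|^2)$ whose smallness requires convergence of $\nabla\ve/\ve$ in $L^{2p'}$, which is not immediate from Lemma~\ref{lm:ve_w1q_bound} for $p$ close to $\frac n2$. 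None of this is addressed. As written, the proposal identifies the right target identity but papers over the two load-bearing inequalities (sign extraction with error control, and weak lower semicontinuity of a coupled functional) that the argument hinges on.
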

\begin{proof}
The derivation of such an explicit formula for $u$ has already been discussed in detail in a closely related setting in Section~3 of \cite{WinklerSingularLimit2021}. Our setting only slightly differs from the one discussed in the reference in regards to the equation solved by $v$, which does not come into play in the relevant arguments to derive the explicit representation for $u$. Given this, we will only briefly sketch out the relevant ideas for said derivation and refer to  \cite{WinklerSingularLimit2021} for a more in-depth discussion as it seems unnecessary to basically reiterate the exact arguments from the reference.

The argument mainly rests on two key ideas:
\begin{itemize}
	\item The mass conservation property proven in Lemma~\ref{lm:u_const_mass} in combination with the already established regularity properties for $u$ and $v$ in Lemma~\ref{lm:limit_functions} is sufficient to conclude from the weak solution property (\ref{eq:u_weak_solution}) that $u(x,t)$ is positive for almost every $x\in\Omega$ and almost every $t \in \Omega$ (cf.\ \cite[Lemma 3.6]{WinklerSingularLimit2021}).
	\item Further, if we could set $\varphi = \ln(u) - \chi \ln(v)$ as a test function in (\ref{eq:u_weak_solution}), it would result in
	\[
	\int_0^T\int_\Omega u|\grad \ln(u) - \chi \grad \ln(v)|^2 = 0,
	\]
	which in turn would give us that $\ln(u) = \chi\ln(v) + C$ or further that $u = C v^\chi$ almost everywhere due to $u$ being sufficiently positive. As $\int_\Omega u = m$ for almost all times, we would gain that $C = m\left(\int_\Omega v^\chi\right)^{-1}$, which would give us our desired representation (\ref{eq:u_explicit_form}) almost everywhere. It is, of course, necessary to make this idea precise by means of appropriate approximation arguments given that the $\varphi$ proposed above is not a sufficiently regular test function (cf.\ \cite[Lemma 3.7]{WinklerSingularLimit2021}). Notably it is only this approximation argument that necessitates for $\Omega$ to be convex.
\qedhere
\end{itemize} 
\end{proof}

This representation for $u$ for almost every time $t$ now allows for a reduction of the two weak solution properties in (\ref{eq:u_weak_solution}) and (\ref{eq:v_weak_solution}) to a single weak formulation for the second solution component. Given the global upper and lower bounds for $v$ already established, this in turn allows us to show that $v$ is in fact already a classical solution by use of some standard parabolic regularity theory. By then setting $w \defs \ure^t v$, we gain the classical solution to (\ref{limit_problem}) desired in Lemma~\ref{lm:limit_reduction}.

\begin{proof}[Proof of Lemma~\ref{lm:limit_reduction}]
	According to Lemma~\ref{lm:limit_functions} and Lemma~\ref{lm:u_explicit_form}, the function $v \in L^2((0,T); W^{1,2}(\Omega))$ constructed in Lemma~\ref{lm:limit_functions} is in fact a standard weak solution of the parabolic Neumann problem
	\begin{equation}\label{eq:v_classical_solution}
	\left\{ \;
	\begin{aligned}
	v_t &= \laplace v - v + f(x,t) && \text{in } \Omega\times(0,T),\\
	\grad v \cdot \nu &= 0 && \text{on } \partial\Omega\times(0,T),\\  
	v(\cdot, 0) &= v_0 && \text{in } \Omega\\  
	\end{aligned}
	\right.
	\end{equation}
	with 
	\[
	f(x,t) \defs m\,\frac{v^{\chi + 1}(x,t)}{\int_\Omega v^\chi(\cdot,t)} \quad \quad \text{ for all } (x,t) \in \Omega\times [0,T).
	\]
	This makes the function $v$ accessible to standard parabolic regularity theory. Because both $v$ and $\frac{1}{v}$ belong to $L^\infty(\Omega\times(0,T))$ by Lemma~\ref{lm:limit_functions}, we know that $f\in L^\infty(\Omega\times(0,T))$. As such, the Hölder regularity results from \cite{PorzioVespriHoelder} are applicable and immediately give us that $v \in C^{\alpha, \frac{\alpha}{2}}(\overline{\Omega}\times[0,T])$. Together with the already mentioned bounds for $v$, this directly implies $f\in C^{\alpha, \frac{\alpha}{2}}(\overline{\Omega}\times[0,T])$. Another application of parabolic regularity theory then yields that $v$ is an element of $C^{2,1}(\overline{\Omega}\times(0,T))$ and therefore a classical solution to (\ref{eq:v_classical_solution}) (cf.\ \cite{LadyzenskajaLinearQuasilinearEquations1988}).

	If we now set $w \defs \ure^t v$ on $\overline{\Omega}\times[0,T]$, we can compute that $w$ fulfills the partial differential equation
	\[
	w_t = \ure^t v + \ure^t v_t = \ure^t\laplace v + m \ure^t \frac{v^{\chi+1}}{\int_\Omega v^\chi} = \laplace w + m \frac{w^{\chi+1}}{\int_\Omega w^\chi} \qquad \text{ in } \Omega\times(0,T)
	\]
	with Neumann zero as well as the initial data condition $w(\cdot, 0) = v(\cdot, 0) = v_0$. As the function $w$ is further of the necessary regularity, this completes the proof.
\end{proof}

\section{Blow-up in the limit problem}\label{sec:limit_blowup}
Throughout this section, we fix $R > 0$, $n \ge 3$, $\Omega \defs B_R(0)$ and $\chi > 0$.
Moreover, whenever convenient, we will switch to radial notation and thus for instance write $z(|x|)$ instead of $z(x)$ and denote the derivative with respect to $r = |x|$ by $z_r$ for radially symmetric functions $z$.

We have seen in the previous section that, under the uniform boundedness condition (\ref{eq:assumption}) for the families of solutions to the problem (\ref{problem}) on some time interval $(0,T)$, it is in fact possible to construct a classical solution to what is essentially a scalar variant of its limit problem on the same time interval. As such, this section will now be devoted to analyzing said problem,
	\begin{equation}\label{prob:w}
  \left\{\; \begin{aligned}
	w_t &= \Delta w + m \left( \intom w^\chi \right)^{-1} w^{\chi+1} && \text{in $\Omega \times (0, T)$}, \\
	\nabla w \cdot \nu &= 0                                          && \text{on $\partial \Omega \times (0, T)$}, \\
	w(\cdot, 0) &= w_0                                               && \text{in $\Omega$}
  \end{aligned} \right.
	\end{equation}
for given $m > 0$.
More specifically for any given $T > 0$, our goal is to construct (radial) nonnegative initial data $w_0$ such that the associated solution blows up before said time $T$ provided $m$ is sufficiently large.

To place the arguments of this section on a reasonably solid foundation, we begin by giving an existence and crucially uniqueness result for classical solutions of (\ref{prob:w}) accompanied by an appropriate blow-up criterion. The uniqueness property is of particular interest as it allows us to identify the solution constructed in the previous section with the blow-up solution constructed in this section. We further give some straightforward additional regularity properties of these solutions that are necessary for some of the methods employed later in this section.
\begin{lemma}\label{lm:ex_w}
  Let $m \gt 0$ and
  suppose that $1 \le w_0 \in \con2$ with $\nabla w_0 \cdot \nu = 0$ on $\partial \Omega$ is radially symmetric.
  Then there exist $\tmax \equiv \tmax(m, w_0) \in (0, \infty]$ and a unique classical solution
  \begin{align*}
    w \in C^0(\Ombar \times [0,\tmax))\cap C^{2, 1}(\Ombar \times (0, \tmax))
  \end{align*}
  of \eqref{prob:w}, which is radially symmetric and fulfills $w \ge 1$ in $\Ombar \times [0, \tmax)$,
  with the property that if $\tmax \lt \infty$, then
  \begin{align*}
    \limsup_{t \nea \tmax} \|w(\cdot, t)\|_{\leb\infty} = \infty.
  \end{align*}
  Moreover, $w_r$ belongs to $C^{2, 1}(\Ombar \times (0, \tmax)) \cap C^0(\Ombar \times [0, \tmax))$.
\end{lemma}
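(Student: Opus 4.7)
The plan is a standard contraction-mapping argument for local existence followed by a bootstrap/extension argument for the remaining properties.

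\textbf{Local existence and uniqueness.} Since $w_0 \ge 1$, for small $\tau > 0$ one can work in $S \defs \{w \in C^0(\Ombar \times [0,\tau]) : \tfrac12 \le w \le 2\|w_0\|_{\leb\infty} + 1\}$, on which $\intom w^\chi \ge |\Omega|/2^\chi > 0$ remains bounded below and the nonlinearity $F(w) \defs m w^{\chi+1}/\intom w^\chi$ is well-defined and Lipschitz in $w$. The Duhamel map $w \mapsto \ure^{t\Delta}w_0 + \int_0^t \ure^{(t-s)\Delta} F(w(\cdot,s))\ds$ is then a contraction on $S$ for $\tau$ sufficiently small, producing a unique mild solution. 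A short bootstrap via parabolic Schauder estimates---viewing $F(w)$ as a Hölder-continuous forcing term---upgrades this to a classical $C^{2,1}$ solution, and the contraction also yields uniqueness among classical solutions.

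\textbf{Radial symmetry, lower bound, and blow-up criterion.} Rotational invariance of the PDE, the domain, and the boundary condition together with the uniqueness statement force $w$ to inherit the radial symmetry of $w_0$. Since $F(w) \ge 0$, $w$ is a supersolution of the Neumann heat equation, so the parabolic minimum principle applied to $w - 1$ gives $w \ge 1$ throughout $\Ombar \times [0, \tmax)$. Defining $\tmax$ as the supremum over existence times reached by iterating the local result, suppose that $\tmax < \infty$ and $\|w\|_{\leb\infty} \le M$ on $\Omega \times (0, \tmax)$: the lower bound $w \ge 1$ then yields $\intom w^\chi \ge |\Omega|$, so $F(w)$ is uniformly bounded on $\Omega \times (0, \tmax)$, and standard parabolic Schauder theory provides sufficient Hölder regularity of $w$ up to $t = \tmax$ to restart the local existence argument from $w(\cdot, \tmax - \delta)$, contradicting maximality.

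\textbf{Additional regularity of $w_r$.} Once $w \in C^{2,1}$ is secured, $F(w)$ is Hölder-continuous in space--time, so a further round of parabolic Schauder estimates improves the regularity of $w$ in the interior $t > 0$. Passing to Cartesian coordinates, where the PDE has smooth coefficients in $x$, one obtains $\nabla w \in C^{2, 1}(\Ombar \times (0, \tmax))$, with joint continuity up to $t = 0$ inherited from $w_0 \in C^2$. The radial derivative $w_r(x, t) = |x|^{-1} x \cdot \nabla w(x, t)$ then has the claimed regularity, with the even extension of the radial profile giving $w_r(0, t) = 0$.

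No individual step is hard; the only delicate points are (i) maintaining the lower bound $w \ge 1$ so that the nonlocal denominator $\intom w^\chi$ never degenerates---this both closes the local theory and drives the extension argument---and (ii) reading off the regularity of $w_r$ near $r = 0$ from the Cartesian form of the equation rather than from the radial one, which would otherwise face the singular coefficient $-(n-1)r^{-2} w_r$ at the origin.
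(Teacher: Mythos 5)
Your proposal is correct in outline but takes a genuinely different route from the paper's. The paper frames everything in the abstract analytic-semigroup setting of Lunardi: it defines the sectorial realization $A$ of the Neumann Laplacian on $X = C^0(\Ombar)$, restricts the nonlinearity $F(\varphi)=m(\intom\varphi^\chi)^{-1}\varphi^{\chi+1}$ to the open set $\mathcal O = \{\varphi : \inf \varphi > \frac12\}$, and invokes Lunardi's Theorem~7.1.3, Proposition~7.1.10 and Proposition~7.1.8 to obtain existence, classical regularity and the blow-up criterion in one package; the extra $w_r$ regularity (including continuity up to $t=0$) is then obtained by running the same machinery with the realizations of $A$ on $C^1(\Ombar)$ and $C^{1+\alpha}(\Ombar)$. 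You instead construct the solution by hand via a Duhamel contraction on a ball in $C^0(\Ombar\times[0,\tau])$ and bootstrap with Schauder estimates. Both are legitimate, and your route is more elementary and self-contained, but it is also the one that demands more care at exactly the point you gloss over: the mild solution produced by contraction is initially only continuous, so the "view $F(w)$ as a H\"older forcing term" step already presupposes one round of interior regularity gain that you do not explain, and the assertion that joint continuity of $w_r$ up to $t=0$ is "inherited from $w_0\in C^2$" is not automatic---it is precisely what the paper's passage to the $C^1(\Ombar)$-realization of $A$ is designed to secure (one must know that the Neumann heat semigroup acts as a strongly continuous semigroup on $C^1$-data satisfying the compatibility condition $\grad w_0\cdot\nu=0$). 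Also note that the stated regularity is $w_r\in C^{2,1}$, i.e.\ one more spatial derivative than $C^{1+\alpha}$ interior regularity provides; this requires differentiating the equation (or an additional bootstrap), which your sketch acknowledges implicitly but does not carry out. None of these gaps is fatal, but if you want to avoid importing Lunardi's framework you would need to flesh out the mild-to-classical bootstrap and the $C^1$-continuity at $t=0$ explicitly, whereas the paper discharges both at once by citing the appropriate abstract results.
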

\begin{proof}
  We fix a positive and radially symmetric $w_0 \in C^2({\Ombar})$ with $\partial_\nu w_0 = 0$ on $\partial \Omega$ as well as some $m > 0$.

  Local existence and regularity properties can be shown as in \cite[Example~51.13]{QuittnerSoupletSuperlinearParabolicProblems2019} by essentially using results from \cite{LunardiAnalyticSemigroupsOptimal1995} in appropriate fashion. Given that we need a slightly stronger version of these properties, we will nonetheless give a quick sketch of the necessary arguments:

  To frame our parabolic problem in way accessible to the results in \cite{LunardiAnalyticSemigroupsOptimal1995}, let first $A$ be the sectorial realization of the Neumann Laplacian on $X \defs C^0(\Ombar)$ given in \cite[Corollary~3.1.24 (ii)]{LunardiAnalyticSemigroupsOptimal1995} and \[
  	\mathcal{O}\defs \left\{ \varphi \in X \,\middle|\, \inf_{x\in\Ombar} \varphi > \frac{1}{2} \right\}.
  \] 
  Then the (actually $t$-independent) function
  \[
  	F: [0,\infty)\times\mathcal{O} \rightarrow X, \;\;\;\; (t,\varphi) \mapsto m \left(\int_\Omega \varphi^\chi\right)^{-1} \varphi^{\chi + 1},
  \]
  is continuous and satisfies the local Lipschitz property (7.1.12) from \cite{LunardiAnalyticSemigroupsOptimal1995}. This then allows us to use Theorem~7.1.3~(i) and Definition~7.1.7 from \cite{LunardiAnalyticSemigroupsOptimal1995} to conclude that the problem
  \begin{equation*}
  \left\{\;
  \begin{aligned}
    w_t &= A w + F(t,w) && \text{ in } (0, \infty), \\
    w(0) &= w_0
  \end{aligned}
  \right.
  \end{equation*}
  has a unique mild solution $w \in C^0([0,\tmax);C^0(\Ombar))$ with maximal existence time $\tmax \in (0,\infty]$. By application of \cite[Proposition~7.1.10]{LunardiAnalyticSemigroupsOptimal1995}, we then gain that the solution was in fact classical as well. Further applications of the same proposition for the realizations of $A$ on either $C^1(\Ombar)$ or $C^{1+\alpha}(\Ombar)$ discussed in \cite[Theorem~3.1.26]{LunardiAnalyticSemigroupsOptimal1995} and \cite[Corollary 3.1.32]{LunardiAnalyticSemigroupsOptimal1995}, respectively, then result in our remaining desired regularity properties for $w$ and $w_r$.
  
  As $w$ is a supersolution of the corresponding Neumann heat equation, we further note that $w \geq 1$ in $\Ombar \times [0,\tmax)$ by comparison, which gives us our desired blow-up criterion due to \cite[Proposition 7.1.8]{LunardiAnalyticSemigroupsOptimal1995} and the comment immediately following it. Lastly, radial symmetry of $w$ follows by the already established uniqueness property as any rotated version of $w$ again solves the very same Neumann problem.
\end{proof}
Given this existence result, we can now in fact make the central result of this section described in the introductory paragraph precise.
\begin{lemma}\label{lm:w_arbitrarily_earily_blowup}
There exists $m_0 > 0$ such that for $m > m_0$ the following holds true: For each $T > 0$, there exists radially symmetric initial data $1 \le w_0 \in C^2(\overline{\Omega})$ with $\nabla w_0 \cdot \nu = 0$ on $\partial \Omega$ such that the unique classical solution to (\ref{prob:w}) given by Lemma~\ref{lm:ex_w} with initial data $w_0$ has a maximal existence time $\tmax$ of less than $T$.
\end{lemma}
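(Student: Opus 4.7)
The plan is to follow the blueprint outlined at the end of the introduction, adapting the approach of \cite{QuittnerSoupletSuperlinearParabolicProblems2019} to the scale-invariant exponent $\alpha = \chi+1$. First I would fix an exponent $q \in (1, \chi+1)$ close enough to $\chi+1$ that $\frac{2\chi}{q-1} < n$, which is what guarantees that the singular profile $r^{-2/(q-1)}$ is integrable to the $\chi$-th power over $\Omega$. Then I would construct a one-parameter family of radially symmetric, radially nonincreasing initial data $w_0^{(s_0)} \in C^2(\Ombar)$ with $w_0^{(s_0)} \ge 1$ and $\grad w_0^{(s_0)} \cdot \nu = 0$ on $\partial\Omega$, parametrized by a small cutoff $s_0 > 0$, which are smoothed truncations of $|x|^{-2/\chi}$ and whose peaks at the origin diverge as $s_0 \searrow 0$. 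I would arrange them so that $(w_0^{(s_0)})_r + \eta r (w_0^{(s_0)})^q \le 0$ on $(0,R)$ holds for a single $\eta > 0$ independent of $s_0$.

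The centerpiece is a uniform-in-$s_0$ pointwise upper bound on $w$ away from the origin, achieved via the auxiliary function $J(r,t) \defs w_r(r,t) + \eta r w(r,t)^q$ on $(0,R/2) \times (0,\tmax)$. A direct computation using the equation for $w$ and the structure of the radial Laplacian should show that $J$ satisfies a parabolic inequality of the form $J_t \le J_{rr} + a(r,t) J_r + b(r,t) J$. The initial sign $J(\cdot,0) \le 0$ comes from the construction; the inner boundary condition $J(0,t) = 0$ follows from radial symmetry together with the explicit factor $r$; and the outer boundary condition $J(R/2,t) \le 0$ must be extracted separately from a uniform upper bound on $w(R/2,t)$ (Lemma~\ref{lm:w_pw_local_bdd}) and a strictly negative uniform upper bound on $w_r(R/2,t)$ (Lemma~\ref{lm:w_r_bdd}), with $\eta$ then shrunk if necessary. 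The parabolic maximum principle will then force $J \le 0$, i.e.\ $w_r \le -\eta r w^q$, and a straightforward radial integration of this ODE-inequality from the origin yields the profile bound $w(r,t) \le C_0 r^{-2/(q-1)}$ on $(0,R/2)$ with $C_0 = C_0(\eta,q)$ independent of $s_0$ and of $t$.

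With this bound in hand, $\intom w^\chi \le K$ for some $K$ depending only on $R,n,\chi,q,\eta$, whence $k(t) \defs m \bigl(\intom w^\chi\bigr)^{-1} \ge m/K$ as long as the solution exists. Next I would fix $\lambda > 0$ large enough that for every prescribed $T > 0$ the auxiliary Neumann problem $z_t = \laplace z + 2\lambda z^{\chi+1}$ admits a radial solution blowing up before time $T$ once its initial peak is taken high enough; this is a standard ODE-dominated blow-up mechanism (Kaplan-type argument, using a first-eigenfunction weight). Setting $m_0 \defs 2\lambda K$, for $m > m_0$ one has $k(t) \ge 2\lambda$ on $(0,\tmax)$, so $w$ is a supersolution of the comparison problem with the same initial data. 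For a given $T > 0$, choosing $s_0$ small enough forces the comparison solution to blow up before $T$, and the comparison principle then gives $\tmax < T$.

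The main obstacle is the uniform control of the boundary term $w(R/2,t)$ (Lemma~\ref{lm:w_pw_local_bdd}). In the analogous case $\alpha = \chi$ treated in the literature, local $L^1$ estimates obtained by integrating the equation in space immediately dominate this boundary value; here, no comparable mass-type estimate is available for the strongly superlinear production $k(t) w^{\chi+1}$, and the scale invariance of the problem at the threshold $\alpha = \chi+1$ noted in the introduction precludes a simple rescaling trick. A genuinely different localization argument will be needed, and it is exactly the need to compensate for this that produces the unavoidable largeness requirement $m > m_0$.
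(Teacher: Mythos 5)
Your overall architecture matches the paper's: smoothed truncations of $|x|^{-2/\chi}$ as initial data, the auxiliary function $J = w_r + \eta r w^q$ controlled by the parabolic maximum principle to obtain a profile bound $w(r,t) \le C r^{-2/(q-1)}$, the resulting uniform bound on $\intom w^\chi$ yielding $k(t) \ge 2\lambda$ for $m$ large, and comparison with the $z_t = \Delta z + 2\lambda z^{\chi+1}$ problem to deduce blow-up before $T$. The proof of the lemma in the paper is literally the concatenation of these steps.

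There are, however, two genuine gaps. The first is the one you candidly flag: you invoke the boundary estimate on $w(\tfrac R2, \cdot)$ as a black box but do not produce it. The paper's argument for this (Lemma~\ref{lm:w_pw_local_bdd}) is the real technical core and is not a minor variant of the $\alpha = \chi$ case: it tests the equation with $w\varphi^2$ for a radial cutoff $\varphi$ vanishing near the origin, exploits the a priori bound $\intom w^\chi \le m/(2\lambda)$, reduces to a one-dimensional setting on an annulus via radial symmetry, and applies a one-dimensional Gagliardo--Nirenberg inequality to close a differential inequality of the form $\ddt\intom (w\varphi)^2 \le 2K_5 m^2 \intom (w\varphi)^2 + 2K_4 m$. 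The Gronwall factor $\ure^{2K_5 m^2 t}$ is then harmless \emph{only} because the argument is restricted to $t \le m^{-2}$; without this time restriction the bound would not be uniform in $m$.

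The second gap is that you present the profile bound $w(r,t) \le C_0 r^{-2/(q-1)}$ as holding ``on $(0,\tmax)$'' and then derive $k(t)\ge m/K$ ``as long as the solution exists,'' which is circular as stated: the boundary control you cite requires the nonlocal term to already satisfy $k \ge 2\lambda$, but $k \ge 2\lambda$ is precisely what you are trying to derive. The paper resolves this by introducing $T_0(w_0,m) = \min\{m^{-2}, \tmax\}$ and $T_1(w_0,m) = \sup\{\tau \le T_0 : k \ge 2\lambda \text{ on } [0,\tau]\}$, proving all a priori bounds only on $(0,T_1)$, and then showing a posteriori (Lemma~\ref{lm:t1_eq_t0}) that $T_1 = T_0$ whenever $m \ge m_0$, because the profile bound forces $k \ge 4\lambda$ on $(0,T_1)$, giving strict margin over the threshold $2\lambda$. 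The restriction to $t < m^{-2}$ is immaterial for the conclusion since one anyway chooses $T < m^{-2}$ and then pushes the comparison blow-up time $T_z$, which scales like $M^{-\chi}$, below $T$ by taking the initial peak $M$ large. Without this continuation structure the argument does not close, and your proposal currently lacks it. Finally, you attribute the largeness requirement $m > m_0$ primarily to the difficulty of the boundary estimate; more precisely it arises because $\lambda$ is fixed by the initial data construction (the inequality $\Delta w_0 + \lambda w_0^{\chi+1} \ge 0$) while the scale invariance at $\alpha = \chi+1$ prevents rescaling $m$ and $\lambda$ independently, so one must impose $m/K \ge 4\lambda$ by fiat.
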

As our first step in proving the above, we now construct candidates for the initial data $w_0$. The key idea here is that the closer we already start to something similar to a blow-up state, while retaining the ability to derive a uniform upper bound for the dampening term $\int_\Omega w^\chi$ in (\ref{prob:w}), the faster we would expect the solution to blow up, if it does so at all. 

As such, for our initial data we consider a singularity of the type $S(x) = |x|^{-\frac{2}{\chi}}$, which we cut off close to zero to ensure sufficient differentiability and cut off near the boundary to ensure Neumann boundary conditions. We can do this while uniformly bounding the initial data in $L^\chi(\Omega)$ simply because $S \in L^\chi(\Omega)$ already. Further note that we are able to retain some of the convenient properties of $S$ such as $S_r(r) = -\tfrac{2}{\chi}r S^{\chi + 1}(r) < 0$ and $\laplace S(x) = -\frac{2}{\chi} (n - \frac{2}{\chi} - 2) S(x)^{\chi + 1}$ in a uniform, albeit somewhat weakened,  fashion.
\begin{lemma}\label{lm:w0}
  There exist $A, \lambda, \mu \gt 0$ and $W \in C^\infty([\frac R4, R])$ with $W_r \lt 0$ in $(\frac R4, R)$
  such that for all $M \gt 0$,
  there is $1 \le  w_0 \in \con2$ with
  \begin{gather}
    \label{eq:w0:multi}
    w_0(0) \ge M, \quad
    w_{0 r}(R) = 0
    \quad \text{and} \quad
    w_{0 r} \le 0 \text{ in $(0, R)$}, \\
    \label{eq:w0:w0_W}
    w_0(r) = W(r) \qquad \text{for all $r \in [\tfrac R4, R]$}, \\
    \label{wq:w0:lchi}
    \intom w_0^\chi \le A^{-1}, \\
    \label{eq:w0:near_bu}
    \Delta w_0 + \lambda w_0^{\chi+1} \ge 0 \qquad \text{in $\Omega$} \quad \text{and} \\
    \label{eq:w0:comp}
    w_{0 r} + \mu r w_0^{\chi+1} \le 0 \qquad \text{in $(0, \tfrac{R}{2})$}.
  \end{gather}
\end{lemma}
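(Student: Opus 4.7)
The plan is to build $w_0$ by $C^2$-regularizing the formal blow-up profile $S(r) := r^{-2/\chi}$, which by direct computation satisfies the key identities
\[
  S_r = -\tfrac{2}{\chi}\, r\, S^{\chi+1}
  \qquad \text{and} \qquad
  \Delta S = \tfrac{2}{\chi}\bigl(\tfrac{2}{\chi} + 2 - n\bigr)\, S^{\chi+1},
\]
and lies in $L^\chi(\Omega)$ since $n \ge 3$. Hence $S$ is already a singular model solution for both \eqref{eq:w0:comp} (with $\mu = \tfrac{2}{\chi}$) and \eqref{eq:w0:near_bu} (for any $\lambda \ge \tfrac{2}{\chi}(n - \tfrac{2}{\chi} - 2)_+$). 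The task therefore reduces to (i) capping $S$ near $r = 0$ in an $M$-dependent way and (ii) modifying $S$ near $r = R$ via a fixed function $W$ to restore the Neumann condition.

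For step (ii), I would fix once and for all $W \in C^\infty([R/4, R])$ with $W \equiv c S$ on a neighborhood $[R/4, R/3]$ of $R/4$, where $c \ge (R/4)^{2/\chi}$ is chosen so that $c S \ge 1$ there, and smoothly extend $W$ to $[R/3, R]$ as a strictly decreasing function with $W_r(R) = 0$ and $W \ge 1$; this is a routine cutoff construction. For step (i), given $M > 0$, I would take $r_M > 0$ (to be specified below) and let $P_M(r) := A_M - B_M r^2 - C_M r^4$ be the unique even quartic polynomial whose coefficients are determined by the three $C^2$-matching conditions $P_M^{(k)}(r_M) = (c S)^{(k)}(r_M)$ for $k \in \{0,1,2\}$. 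Solving the linear system yields $A_M = c a_1 r_M^{-2/\chi}$, $B_M = c a_2 r_M^{-2/\chi-2}$, $C_M = c a_3 r_M^{-2/\chi-4}$ with fixed constants $a_1, a_2 > 0$ and $a_3 < 0$ depending only on $\chi$; in particular $w_0(0) = A_M \ge M$ can be arranged by choosing $r_M \le (c a_1 / M)^{\chi/2}$. I then define $w_0$ piecewise as $P_M$ on $[0, r_M]$, as $c S$ on $[r_M, R/4]$, and as $W$ on $[R/4, R]$; smoothness is automatic at each interface (by the matching at $r_M$ and the coincidence $W \equiv c S$ near $R/4$), the radial profile is even in $r$ at the origin, and $w_0 \ge 1$ holds since all three pieces are monotone with boundary value at least $1$.

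The main obstacle is the uniform-in-$M$ verification of \eqref{eq:w0:near_bu} and \eqref{eq:w0:comp} on the cap $[0, r_M]$. Rescaling via $s := r/r_M$ gives $P_M(r) = c r_M^{-2/\chi} \phi(s)$ with the fixed polynomial $\phi(s) := a_1 - a_2 s^2 - a_3 s^4$, and a direct calculation shows that both inequalities scale homogeneously, reducing to the dimensionless conditions
\[
  \phi''(s) + \tfrac{n-1}{s}\phi'(s) + \lambda c^\chi \phi(s)^{\chi+1} \ge 0
  \qquad \text{and} \qquad
  \phi'(s) + \mu c^\chi s\, \phi(s)^{\chi+1} \le 0
\]
on $[0, 1]$. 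The second holds for all sufficiently small $\mu > 0$, since $\phi'(s) = -2s(a_2 + 2 a_3 s^2)$ with $a_2 + 2 a_3 s^2 \ge \tfrac{1}{\chi} > 0$ on $[0, 1]$ (by direct computation of the minimum); the first holds for all sufficiently large $\lambda$, using compactness of $[0, 1]$ together with the strict positivity of $\phi$ on $[0, 1]$. Outside the cap, both inequalities follow from the identities for $c S$ on $[r_M, R/4]$ and from the explicit construction of $W$ on $[R/4, R]$, after possibly enlarging $\lambda$ and shrinking $\mu$. Finally, \eqref{wq:w0:lchi} follows from the pointwise bound $w_0 \le A_M \le C r_M^{-2/\chi}$ on $[0, r_M]$, which gives $\int_{|x|<r_M} w_0^\chi \le C' r_M^{n-2} \to 0$ as $r_M \searrow 0$, combined with the uniform bounds $\int_{r_M<|x|<R/4} c^\chi S^\chi \le c^\chi \int_\Omega S^\chi < \infty$ and $\int_{R/4<|x|<R} W^\chi < \infty$.
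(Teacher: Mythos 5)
Your proposal is correct and follows essentially the same route as the paper: a $C^2$-regularized cap of the singular profile $r^{-2/\chi}$ near the origin (the paper uses a cubic $a_0 + a_2(r/\delta)^2 + a_3(r/\delta)^3$ obtained from the same $C^2$-matching conditions at $r = \delta$; you use an even quartic, but both the rescaled verification and the resulting $M$-independence of $\lambda, \mu, A$ work out identically), glued to a fixed smooth decreasing tail $W$ that restores the Neumann condition at $r=R$. The only (inconsequential) slips are that you should take $c \ge (R/3)^{2/\chi}$ rather than $(R/4)^{2/\chi}$ to guarantee $cS \ge 1$ on $[R/4, R/3]$, and you should explicitly cap $r_M \le R/4$ so the middle piece is nonempty.
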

\begin{proof}
  This can be proven similarly to \cite[Lemma~44.10]{QuittnerSoupletSuperlinearParabolicProblems2019}.
  However, as some small modifications are needed, we still choose to sketch the proof with a particular focus on the pertinent changes.

  We let $\alpha \defs \frac{2}{\chi} \gt 0$ and fix a positive function $W \in C^\infty((0, R])$ with
  \begin{align*}
    W(r) = R^\alpha r^{-\alpha} \text{ for $r \in (0, \tfrac{R}{2})$}, \quad
    W(R) = 1, \quad
    W_r \lt 0 \text{ in $(0, R)$} \quad \text{and} \quad
    W_r(R) = 0.
  \end{align*}
  Moreover letting $\delta \defs \min\{M^{-\frac1\alpha}, \frac{R}{4}\}$, a direct computation shows that
  \begin{align*}
    w_0 \colon \Ombar \ra [0, \infty), \quad
    r \mapsto
    \begin{cases}
      R^\alpha \delta^{-\alpha}(
        1
        + \frac{\alpha(\alpha+5)}{6}
        - \frac{\alpha(\alpha+3)}{2} (\frac{r}{\delta})^2
        + \frac{\alpha(\alpha+2)}{3} (\frac{r}{\delta})^3
      ), & 0 \le r \le \delta, \\
      W(r), & \delta \lt r \le R
    \end{cases}
  \end{align*}
  belongs to $C^2([0,R])$ with $1 \le w_0 \le W$ in $(0, R)$.
  Thus, \eqref{eq:w0:multi} and \eqref{eq:w0:w0_W} are fulfilled.
  Moreover, since $\alpha \chi = 2 < n$ implies that $\int_{B_{R/2}(0)} W^\chi(|x|) \dx = R^{\alpha \chi}\int_{B_{R/2}(0)} |x|^{-\alpha \chi}\dx$ is finite,
  \eqref{wq:w0:lchi} holds for $A \defs (\intom W^\chi)^{-1} \in (0, \infty)$.

  As it is fairly easy to see that $W$ satisfies (\ref{eq:w0:near_bu}) and (\ref{eq:w0:comp}) in $(0, R)$  by a straightforward computation, we will focus our verification efforts for both of these properties for $w_0$ on the interval $(0,\delta)$ to make sure that the involved constants are in fact independent of $\delta$ and hence also of $M$. We first note that
  \[
  	 w_{0 r}(r) = R^\alpha \delta^{-\alpha-1} \alpha \left( - (\alpha + 3) \left(\tfrac{r}{\delta}\right) + (\alpha + 2) \left(\tfrac{r}{\delta}\right)^2\right) 
  	 %
  	 %
     \begin{cases}
  	  \leq -R^\alpha \delta^{-\alpha-2} r \alpha, \\
      \geq -R^\alpha \delta^{-\alpha-2} r \alpha(\alpha + 3)
     \end{cases}
  \] 
  for all $r \in (0, \delta)$
  and similarly
  \[
  	\laplace w_0 = {w_0}_{rr} + \frac{n-1}{r} {w_0}_r \geq -R^\alpha\delta^{-\alpha - 2} \alpha (\alpha + 3) n
  \]
  in $(0,\delta)$. As further
  \[
  	R^{\alpha - 2} \delta^{-\alpha - 2} = w_0^{\chi + 1}(\delta) \leq  w_0^{\chi + 1}(r) \leq w_0^{\chi + 1}(0) = R^{\alpha - 2} \delta^{-\alpha - 2} \left(1 + \tfrac{\alpha(\alpha + 5)}{6}\right)^{\chi + 1},
  \]
  for all $r \in (0,\delta)$, the above estimates immediately yield
  \begin{align*}
    \Delta w_0 + \lambda w_0^{\chi+1} \ge 0 \text{ in $B_\delta(0)$}
    \quad \text{and} \quad
    w_{0 r} + \mu r w_0^{\chi+1} \le 0 \text{ in $(0, \delta)$}
  \end{align*} 
  for $\lambda \defs \alpha(\alpha + 3)n R^2$ and $\mu \defs \alpha ( 1 + \frac{\alpha(\alpha + 5)}{6})^{-\chi - 1}R^2$.
  Upon enlarging $\lambda$ and $-\mu$ to account for $W$, if necessary, this entails \eqref{eq:w0:near_bu} and \eqref{eq:w0:comp}.
\end{proof}

To streamline later arguments, we henceforth fix $A, \lambda, \mu > 0$ and $W$ as given by Lemma~\ref{lm:w0}.
As our next step in proving Lemma~\ref{lm:w_arbitrarily_earily_blowup}, we now show that the solution $w(\cdot, \cdot; w_0, m)$ of \eqref{prob:w} emanating from the initial data $w_0$ given by Lemma~\ref{lm:w0} blows up arbitrarily early if $m$ and $M$ are sufficiently large.
The main ingredient of our proof is obtaining a lower bound for the nonlocal term
\begin{align*}
  k(t; w_0, m) \defs m \left( \intom w^\chi(\cdot, t; w_0, m) \right)^{-1}, \qquad t \in (0, \tmax(w_0, m)),
\end{align*}
wherein $w_0$ is as in Lemma~\ref{lm:w0}
and $w(\cdot, \cdot; w_0, m)$ and $\tmax(w_0, m)$ denote the solution of \eqref{prob:w} and its maximal existence time, respectively,
given by Lemma~\ref{lm:ex_w}.

Indeed, given the bound $k(t, w_0, m) \ge 2\lambda$,
we are able to conclude finiteness of $\tmax(w_0, m)$ in a quantifiable way in terms of the constants $\lambda$ and $M$
since $w(\cdot, \cdot; w_0, m)$ is then seen to be a supersolution to an equation whose solution already blows up in finite time.
The latter assertion is the content of the following
\begin{lemma}\label{lm:z_blow_up}
  Let $M \gt 0$ and $w_0$ be as in Lemma~\ref{lm:w0}.
  There exists $T_z \equiv T_z(w_0) \in (0, \frac{M^{-\chi}}{\lambda \chi})$
  and a nonnegative $z \in C^0(\Ombar \times [0, T_z)) \cap C^{2, 1}(\Ombar \times (0, T_z))$ solving
  \begin{equation}\label{eq:z_blow_up:z_eq}
  \left\{\; \begin{aligned}
    z_t &= \Delta z + 2\lambda z^{\chi+1} && \text{in $\Omega \times (0, T_z)$}, \\
    \nabla z \cdot \nu &= 0               && \text{on $\partial \Omega \times (0, T_z))$}, \\ 
    z(\cdot, 0) &= w_0                    && \text{in $\Omega$}
  \end{aligned}\right.
  \end{equation}
  classically with the property that $\limsup_{t \nea T_z(w_0)} \|z(\cdot, t)\|_{\leb\infty} = \infty$.
  (We recall that $\lambda$ has been fixed above and is given by Lemma~\ref{lm:w0}.)
\end{lemma}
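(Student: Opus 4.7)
The plan is to show the pointwise estimate $z_t \ge \lambda z^{\chi+1}$ throughout the existence interval of $z$, and then extract the blow-up bound by an ODE comparison at the origin, which exploits the lower bound $w_0(0) \ge M$. First, I would obtain a unique, nonnegative, radially symmetric classical solution $z \in C^0(\Ombar \times [0, T_z)) \cap C^{2,1}(\Ombar \times (0, T_z))$ of \eqref{eq:z_blow_up:z_eq} on a maximal interval $[0, T_z)$ with the blow-up criterion $\limsup_{t \nea T_z}\|z(\cdot, t)\|_\L\infty = \infty$ (in the case $T_z \lt \infty$), by the same abstract semigroup argument used for Lemma~\ref{lm:ex_w}---indeed the situation is simpler here because the nonlinearity $2\lambda z^{\chi+1}$ is purely local. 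Uniqueness combined with rotational invariance of the equation yields radial symmetry of $z$, while comparison with the constant subsolution $1$ gives $z \ge 1$ in $\Ombar \times [0, T_z)$, and standard parabolic bootstrap provides the additional smoothness needed below.

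Setting $g \defs z_t - \lambda z^{\chi+1}$ and using the equation together with its time-derivative $z_{tt} = \Delta z_t + 2\lambda(\chi+1) z^\chi z_t$, a direct computation (combined with $z_t + \Delta z = 2(z_t - \lambda z^{\chi+1}) = 2g$) gives the identity
\begin{align*}
g_t - \Delta g - 2\lambda(\chi+1) z^\chi g = \lambda \chi(\chi+1) z^{\chi-1} |\nabla z|^2 \ge 0 \qquad \text{in $\Omega \times (0, T_z)$},
\end{align*}
together with the homogeneous Neumann condition for $g$ inherited from that of $z$ after time-differentiation, and the nonnegative initial value $g(\cdot, 0) = \Delta w_0 + \lambda w_0^{\chi+1} \ge 0$ supplied by \eqref{eq:w0:near_bu}. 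Since the coefficient $2\lambda(\chi+1) z^\chi$ is bounded on $\Ombar \times [0, T']$ for every $T' \lt T_z$, the standard parabolic maximum principle then forces $g \ge 0$, that is, $z_t \ge \lambda z^{\chi+1}$ throughout $\Omega \times (0, T_z)$.

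Evaluating this pointwise inequality at $x = 0$, the function $y(t) \defs z(0, t)$ satisfies $y'(t) \ge \lambda y^{\chi+1}(t)$ with $y(0) = w_0(0) \ge M$, so ODE comparison with the explicit solution $\tilde y(t) = (M^{-\chi} - \lambda \chi t)^{-1/\chi}$ of $\tilde y' = \lambda \tilde y^{\chi+1}$, $\tilde y(0) = M$, which blows up at $\frac{M^{-\chi}}{\lambda \chi}$, immediately yields $T_z \le \frac{M^{-\chi}}{\lambda\chi}$. To upgrade this to strict inequality, I would note that $g(\cdot, 0)$ is not identically zero---on $(R/4, R)$ the function $w_0$ coincides with the $M$-independent $W$ from Lemma~\ref{lm:w0}, and the (possibly enlarged) parameter $\lambda$ renders $\Delta W + \lambda W^{\chi+1}$ strictly positive somewhere---so the strong maximum principle upgrades $g \ge 0$ to $g \gt 0$ in $\Omega \times (0, T_z)$; continuity then yields $g(0, t) \ge \eps_0 \gt 0$ on some subinterval $[t_1, t_2] \subset (0, T_z)$, and restarting the ODE comparison from $t = t_2$ with the strictly larger value $y(t_2) \gt \tilde y(t_2)$ gives blow-up strictly before $\frac{M^{-\chi}}{\lambda\chi}$.

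The main obstacle is the rigorous application of the parabolic maximum principle to the equation for $g$: the computation itself is elementary, but the argument demands enough regularity of $z$ for $t \gt 0$ (obtainable via parabolic bootstrap, since $2\lambda z^{\chi+1}$ is smooth in $z$) and requires care near $t = 0$, which can be handled either via continuity of $g$ down to $t = 0$ or by a brief approximation of $w_0$ by initial data satisfying a strict inequality $\Delta w_0 + \lambda w_0^{\chi+1} \ge \delta \gt 0$. All remaining steps are routine ODE comparison and strong-maximum-principle applications.
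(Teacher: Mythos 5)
Your proposal is correct and follows essentially the same route as the paper: the paper also obtains local existence by the semigroup machinery from Lemma~\ref{lm:ex_w}, then applies the maximum principle to $z_t - \lambda z^{\chi+1}$ together with \eqref{eq:w0:near_bu}, deferring the computational details to \cite[Lemma~44.11]{QuittnerSoupletSuperlinearParabolicProblems2019}; your write-up simply carries out those details explicitly (the parabolic equation for $g$, the ODE comparison at the origin, and the strong-maximum-principle upgrade to strict inequality), all of which are sound.
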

\begin{proof}
  Existence, uniqueness, regularity and the blow-up criterion of the maximally extended classical solution to (\ref{eq:z_blow_up:z_eq}) can be derived similarly as in the proof of Lemma~\ref{lm:ex_w}. The crucial upper bound $\frac{M^{-\chi}}{\lambda \chi}$ for the existence time $T_z(w_0)$ follows from applying the maximum principle to $w_t - \lambda w^{\chi+1}$ and making use of \eqref{eq:w0:near_bu}; we refer to \cite[Lemma~44.11]{QuittnerSoupletSuperlinearParabolicProblems2019} for details.
\end{proof}

Since the upper bound for $T_z$ given by Lemma~\ref{lm:z_blow_up} converges to $0$ for $M \ra \infty$,
it actually suffices to obtain a lower bound for $k(\cdot; w_0, m)$ only for small times---as long at it is independent of $M$.
This will be  achieved as follows:
Setting $m_1 \defs \frac{4 \lambda}{A}$ and
\begin{align*}
  T_0(w_0, m) \defs \min\{m^{-2}, \tmax(w_0, m)\},
\end{align*}
we see that
\begin{align*}
  T_1(w_0, m) \defs \sup\big\{\, \tau \in [0, T_0(w_0, m)): k(t; w_0, m) \ge 2 \lambda \text{ for all } t \in [0, \tau] \,\big\} \in (0, T_0(w_0, m)],
\end{align*}
is well-defined for all $m \ge m_1$ as then $k(0; w_0, m) \geq m A \ge 4 \lambda \gt 2\lambda$.
We will then show in Lemma~\ref{lm:t1_eq_t0} that $T_1(w_0, m) = T_0(w_0, m)$---%
provided $m$ is sufficiently large, independently of $M$.

The main ingredient of the corresponding proof consists of applying the maximum principle to
$J \defs w_r + \eta r w^q$
in $[0, \tfrac R2] \times [0, T_1(w_0, m))$ for certain $\eta \gt 0$ and $q \gt 1$.
In order to prepare this argument,
the following two lemmata ensure that we are able to control $J(\frac R2, \cdot)$ independently of $m$ and $M$,
each focussing on a summand in the definition of $J$.
\begin{lemma}\label{lm:w_r_bdd}
  There is $C \gt 0$ such that for any $M \ge 0$, $w_0$ as in Lemma~\ref{lm:w0} and $m \gt 0$,
  the solution $w$ of \eqref{prob:w} given by Lemma~\ref{lm:ex_w} satisfies
  $w_r \le 0$ in $(0, R) \times (0, T_0(w_0, m))$ and
  \begin{align*}
    w_r(\tfrac R2, t) \le -C
    \qquad \text{for all $t \in (0, T_0(w_0, m))$}.
  \end{align*}
\end{lemma}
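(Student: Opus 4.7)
Both assertions follow from the parabolic maximum principle applied to the linear PDE satisfied by $p := w_r$. Writing the radial form of \eqref{prob:w} as $w_t = w_{rr} + \tfrac{n-1}{r}w_r + k(t)w^{\chi+1}$ with $k(t) := m/\intom w^\chi(\cdot,t)$ and differentiating in $r$ gives, on $(0,R)\times(0,\tmax(w_0,m))$,
\begin{equation*}
p_t = p_{rr} + \tfrac{n-1}{r}p_r + c(r,t)\,p,\qquad c(r,t) := -\tfrac{n-1}{r^2} + (\chi+1)\,k(t)\,w^\chi(r,t),
\end{equation*}
with $p(0,\cdot)=p(R,\cdot)=0$ (by radial smoothness and the Neumann boundary condition) and $p(\cdot,0)=w_{0r}\le 0$ thanks to \eqref{eq:w0:multi}. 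The regularity $p\in C^{2,1}(\Ombar\times(0,\tmax))\cap C^0(\Ombar\times[0,\tmax))$ from Lemma~\ref{lm:ex_w} makes this PDE amenable to comparison.

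\textbf{Step 1: Nonpositivity of $w_r$.} On any compact subinterval $[0,\tau]\subset[0,\tmax)$, $c$ is bounded above, say by $\Lambda$. Applying the parabolic maximum principle to $\tilde p := e^{-\Lambda t}p$, which solves a PDE with nonpositive zeroth-order coefficient $c-\Lambda\le 0$, and using that $\tilde p\le 0$ on the parabolic boundary, yields $p\le 0$ throughout $(0,R)\times(0,T_0(w_0,m))$.

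\textbf{Step 2: Pointwise bound at $R/2$.} The plan is to construct a time-decaying subsolution $\underline q$ of the equation satisfied by $-p\ge 0$ on the annulus $(R/4,R)\times(0,T_0)$, exploiting the fact that by \eqref{eq:w0:w0_W} the initial data on $[R/4,R]$ coincides with the $M$-independent profile $W$. Let $\phi\in C^2([R/4,R])$ be a first Dirichlet eigenfunction of the radial Laplacian $L_r := \partial_r^2 + \tfrac{n-1}{r}\partial_r$ on $(R/4,R)$: $\phi>0$ in $(R/4,R)$, $\phi(R/4)=\phi(R)=0$, and $L_r\phi=-\mu_1\phi$ with eigenvalue $\mu_1>0$. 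Choosing $W$ in Lemma~\ref{lm:w0} so that $W_{rr}(R)>0$ makes $-W_r$ vanish linearly at $R$ at the same rate as $\phi$, so that a fixed $\epsilon>0$ with $\epsilon\phi\le-W_r$ on $[R/4,R]$ exists. With $\gamma := \mu_1 + \tfrac{16(n-1)}{R^2}$---which dominates $\mu_1-c(r,t)$ on $[R/4,R]$ via $c\ge -\tfrac{n-1}{r^2}\ge-\tfrac{16(n-1)}{R^2}$---the function $\underline q(r,t):=\epsilon e^{-\gamma t}\phi(r)$ satisfies
\begin{equation*}
\underline q_t - L_r\underline q - c\,\underline q = \epsilon e^{-\gamma t}\phi\,(-\gamma + \mu_1 - c)\le 0,
\end{equation*}
and its parabolic boundary values are dominated by those of $-p$ (both vanish on the lateral boundaries, and $\underline q(\cdot,0)=\epsilon\phi\le-w_{0r}$ initially). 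Comparison yields $\underline q\le-p$ on $[R/4,R]\times[0,T_0)$, so $w_r(R/2,t)\le -\epsilon\phi(R/2)\,e^{-\gamma t}$ for all $t\in(0,T_0(w_0,m))$.

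\textbf{Main obstacle.} The principal difficulty is extracting a constant $C$ that is genuinely independent of $m$: the factor $e^{-\gamma t}$ is uniformly bounded below only when $T_0(w_0,m)$ is, and the a priori bound $T_0\le m^{-2}$ degenerates as $m\searrow 0$. This gap is innocuous in every application of the lemma in what follows, where the mass satisfies $m\ge m_1 := 4\lambda/A$, so that $T_0\le m_1^{-2}$ and one may take $C := \epsilon\phi(R/2)\,e^{-\gamma m_1^{-2}}$.
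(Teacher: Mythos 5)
Your proof is correct and follows the same overall architecture as the paper's: derive the linear parabolic PDE for $w_r$, use the maximum principle to conclude $w_r \le 0$, then compare on the outer annulus $[\tfrac R4,R]$ where the initial data $W_r<0$ is independent of $M$. The implementation of the second step differs: the paper uses $w_r\le 0$ to simply \emph{drop} the sign-definite term $(\chi+1)k(t)w^\chi w_r\le 0$, obtaining an $m$- and $M$-free linear inequality for $w_r$ on $(\tfrac R4,R)$, and then appeals to the strict maximum principle together with the finiteness of $T_0(w_0,m)$; you instead retain the full zeroth-order coefficient $c$, bound it from below by $-16(n-1)/R^2$ on the annulus, and build an explicit eigenfunction subsolution $\epsilon e^{-\gamma t}\phi$ for $-w_r$. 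Your route has the advantage of producing an explicit decay rate and a concrete value of $C$, at the cost of an extra hypothesis $W_{rr}(R)>0$ which is not part of the construction in Lemma~\ref{lm:w0}. This hypothesis is harmless (it can be incorporated into the construction of $W$ without affecting anything else), but note it can also be avoided entirely by comparing on a slightly smaller annulus $(\tfrac R4, R-\delta)$, where $-W_r$ is bounded away from zero on the closure and no rate-matching at $R$ is needed.

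Your observation in the final paragraph is apt and applies equally to the paper's own phrasing: as stated, the lemma claims a $C$ valid for \emph{all} $m>0$, but both arguments produce a bound that depends on the length of the time interval, and $T_0(w_0,m)\le m^{-2}$ is only useful once one additionally knows $m$ is bounded away from zero. Since every later use of this lemma occurs under $m\ge\max\{m_1,1\}$, where $T_0\le 1$, the uniformity is available exactly where it is needed, and the constant $C:=\epsilon\phi(\tfrac R2)e^{-\gamma}$ (or your $\epsilon\phi(\tfrac R2)e^{-\gamma m_1^{-2}}$) suffices for the remainder of the argument.
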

\begin{proof}
  Since
  \begin{align*}
    &\pe  w_{rt} - w_{rrr} + \frac{n-1}{r^2} w_r - \frac{n-1}{r} w_{rr}
     =    \left(w_t - w_{rr} - \frac{n-1}{r} w_r\right)_r \\
    &=    k(t; w_0, m) (w^{\chi+1})_r
     =    (\chi+1) k(t; w_0, m) w^\chi w_r
    \qquad \text{in $(0, R) \times (0, \tmax(w_0, m))$},
  \end{align*}
  the function $z \defs w_r$ solves
  \begin{equation*}
    \left\{\;\begin{aligned}
      z_t - z_{rr} - \tfrac{n-1}{r} z_r + (\tfrac{n-1}{r^2} - (\chi+1) k(t; w_0, m) w^\chi) z &= 0
        && \text{in $(0, R) \times (0, \tmax(w_0, m))$}, \\
      z &= 0
        && \text{on $\{0, R\} \times (0, \tmax(w_0, m))$}, \\
      z(\cdot, 0) &= w_{0r}
        && \text{in $(0, R)$}.
    \end{aligned}\right.
  \end{equation*}
  Thus, the maximum principle asserts $z \le 0$ in $(0, R) \times (0, \tmax(w_0, m))$,
  which implies that $z$ also satisfies
  \begin{equation*}
    \left\{\begin{aligned}
      z_t - z_{rr} - \tfrac{n-1}{r} z_r + \tfrac{n-1}{r^2} z &\le 0 && \text{in $(\tfrac R4, R) \times (0, \tmax(w_0, m))$}, \\
      z &\le 0                                                      && \text{on $\{\tfrac R4, R\} \times (0, \tmax(w_0, m))$}, \\
      z(\cdot, 0) = W_r &\lt 0                                      && \text{in $(\tfrac R4, R)$},
    \end{aligned}\right.
  \end{equation*}
  where $W$ is given by Lemma~\ref{lm:w0}.
  Therefore, the statement is a direct consequence of the strict maximum principle (and finiteness of $T_0(w_0, m)$).
\end{proof}

The following lemma,
where we inter alia obtain upper bounds for $w(\frac R2, \cdot)$,
constitutes a major difference in reasoning compared to \cite[Section~44.2]{QuittnerSoupletSuperlinearParabolicProblems2019}.
There, the mass of the solution to the considered equation can be easily controlled,
directly implying pointwise upper estimates for radially decreasing $w$.

Evidently, this no longer works for the system \eqref{prob:w}.
Of course,
the definition of $T_1(w_0, m)$ already entails a bound
for $\intom w^\chi(\cdot, t)$ and therefore also for $w(\frac R2, t)$ for $t \in (0, T_1(w_0, m))$,
but its dependence on $m$ would not allow us to eventually conclude $T_1(w_0, m) = T_0(w_0, m)$ for sufficiently large $m$.

Thus, we need to make more subtle use of the bounds implied by the definition of $T_1(w_0, m)$.
The main idea is a cut-off argument which allows us to essentially consider \eqref{prob:w} in annuli centered at zero instead of $\Omega$,
so that due to the radially symmetry the problem becomes spatially one-dimensional.
Then Sobolev embedding theorems are strong enough to allow for even $L^\infty$~bounds in these annuli.
\begin{lemma}\label{lm:w_pw_local_bdd}
  There exists $C \gt 0$ such that
  for all $m \ge \max\{m_1, 1\}$, $M \gt 0$ and $w_0$ as constructed in Lemma~\ref{lm:w0},
  the corresponding solution $w$ of \eqref{prob:w} given by Lemma~\ref{lm:ex_w} fulfills
  \begin{align}\label{eq:w_pw_local_bdd:statement}
    w(r, t) \le C
    \qquad \text{for all $r \ge \tfrac R2$ and $t \in (0, T_1(w_0, m))$.}
  \end{align}
\end{lemma}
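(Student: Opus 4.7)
Since Lemma~\ref{lm:w_r_bdd} provides radial monotonicity $w_r \le 0$, we have $w(r,t) \le w(\frac R2, t)$ for every $r \ge \frac R2$, and so it suffices to bound $w(\frac R2, \cdot)$ uniformly. My plan is a cut-off argument localizing the problem to the annulus $\{\frac R4 < |x| < R\}$, where radial symmetry effectively reduces the equation to one spatial dimension and the embedding $W^{1,2}(I) \hookrightarrow L^\infty(I)$ for a bounded interval $I \subset \R$ is available. Before testing, I would extract the a priori bound $w(\frac R4, t) \le c_1 m^{1/\chi}$: the radial monotonicity gives $w \ge w(\frac R4, t)$ on $B_{R/4}(0)$, whence $\omega_n(\frac R4)^n w(\frac R4,t)^\chi \le \int_{B_{R/4}(0)} w^\chi \le \frac{m}{2\lambda}$ by the definition of $T_1(w_0,m)$. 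Since $w \ge 1$ also yields $k(t) \le m/|\Omega|$, the three $m$-dependent ingredients available are $\|w(\cdot,t)\|_{L^\chi} \le (m/(2\lambda))^{1/\chi}$, $k(t) \le m/|\Omega|$, and $T_1(w_0, m) \le m^{-2}$.

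\textbf{Energy estimate and source control.} Pick a radial cutoff $\zeta \in C^\infty([0,R])$ with $\zeta \equiv 0$ on $[0,\frac R4]$, $\zeta \equiv 1$ on $[\frac R3, R]$ and $\zeta_r(R)=0$ (so $\nabla \zeta \cdot \nu = 0$ on $\partial\Omega$). Testing the PDE with $\zeta^2 w$ and integrating by parts, together with Young's inequality, leads to
\[
  \tfrac12 \tfrac{d}{dt} \intom \zeta^2 w^2 + \tfrac12 \intom \zeta^2 |\nabla w|^2 \le 2 \intom |\nabla\zeta|^2 w^2 + k(t) \intom \zeta^2 w^{\chi+2}.
\]
The first right-hand side term is supported in the transition annulus $\{\frac R4 \le |x| \le \frac R3\}$ and is therefore bounded by $C m^{2/\chi}$ via Step~1. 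For the superlinear term I would write $\intom \zeta^2 w^{\chi+2} \le \|\zeta w\|_\infty^2 \intom w^\chi \le \frac{m}{2\lambda} \|\zeta w\|_\infty^2$ and invoke the one-dimensional Sobolev embedding $W^{1,2}((\frac R4, R)) \hookrightarrow L^\infty$ in the radial variable; since $r^{n-1}$ is bounded above and below on this annulus, $1$D integrals are equivalent to $\Omega$-integrals there, and one obtains
\[
  \|\zeta w\|_\infty^2 \le C \Bigl( \intom \zeta^2 w^2 + \intom \zeta^2 |\nabla w|^2 + \intom |\nabla\zeta|^2 w^2 \Bigr).
\]

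\textbf{Closing via short-time control.} Writing $Y(t) \defs \intom \zeta^2 w^2$ and $Z(t) \defs \intom \zeta^2 |\nabla w|^2$ and assembling the above leads to a differential inequality of the schematic form
\[
  Y'(t) + Z(t) \le C m^{2/\chi} + \tfrac{C m^2}{|\Omega|} \bigl( Y(t) + Z(t) + m^{2/\chi} \bigr).
\]
The large factor $C m^2$ is dangerous, but it is precisely compensated by $T_1 \le m^{-2}$: a Gronwall argument integrated over $[0, T_1(w_0,m)]$ yields an exponential factor $\exp(C m^2 T_1) \le \mathrm e^C$, while the forcing $C m^{2+2/\chi}$ integrated against $T_1 \le m^{-2}$ produces only $C m^{2/\chi}$, which is in turn reabsorbed via the one-dimensional Sobolev step. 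Since $Y(0)$ is controlled by $W$ alone, the resulting bounds on $Y(t)$ and $Z(t)$ depend only on $n$, $R$, $\chi$, $\lambda$ and the fixed profile $W$; passing back through the $1$D Sobolev embedding yields $w(\frac R2, t) \le C$, and radial monotonicity promotes this to \eqref{eq:w_pw_local_bdd:statement}.

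\textbf{Main obstacle.} The principal difficulty is the superlinear source term $k \intom \zeta^2 w^{\chi+2}$, for which the only global information is the $m$-dependent bound $\intom w^\chi \le m/(2\lambda)$. The argument hinges on the delicate cancellation between the $m^2$-growth that this source contributes to the energy estimate and the $m^{-2}$-smallness of the existence window $T_1(w_0, m)$, tempered by the polynomial bound on $w(\frac R4, t)$. Carefully tracking the $m$-dependence of every constant, and thereby ensuring a bound independent of both $m$ and $M$, is the technical heart of the proof; this is also exactly what forces the hypothesis $m \ge \max\{m_1, 1\}$.
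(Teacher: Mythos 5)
Your overall architecture matches the paper's: test the equation against a cutoff $\zeta^2 w$ supported in an annulus, exploit radial monotonicity (Lemma~\ref{lm:w_r_bdd}), use a one-dimensional embedding $W^{1,2}\hookrightarrow L^\infty$ for the radial variable on the annulus, control $\intom w^\chi$ via the definition of $T_1$, and compensate $m$-growth with the window $T_1 \le m^{-2}$. However, the way you close the differential inequality has a genuine gap.

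The central problem is the term $\tfrac{Cm^2}{|\Omega|} Z(t)$ on the right-hand side of your schematic inequality. Your one-dimensional Sobolev step is of additive type, $\|\zeta w\|_\infty^2 \le C\bigl(Y + Z + \intom |\nabla \zeta|^2 w^2\bigr)$, and the source term contributes at least a factor $m$ in front of $\|\zeta w\|_\infty^2$ (in fact, note that $k(t)\intom w^\chi = m$ identically, so the prefactor is $m$, not $m^2/|\Omega|$ — you over-estimate by separating $k(t)$ and $\intom w^\chi$). Either way, after multiplication you obtain $Z$ on the right with a coefficient that grows with $m$, while the coefficient of $Z$ on the left is $\tfrac12$. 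For $m$ large you therefore cannot absorb the gradient term, and Gronwall does not apply to the resulting relation. The paper avoids this precisely by using the \emph{interpolation} form $\|\psi\|_{L^\infty(0,R)}^2 \le K_2\|\psi_r\|_{L^2(0,R)}\|\psi\|_{L^2(0,R)} + K_2\|\psi\|_{L^2(0,R)}^2$: multiplying by $m$ and applying Young's inequality to the cross term allows the free choice of the Young weight, producing a gradient contribution whose coefficient is fixed and $m$-independent (the factor $\tfrac{\omega_{n-1}}{r_0^{1-n}}$ in \eqref{eq:w_pw_local_bdd:ehrling}), which then cancels exactly against the dissipation $\intom|\nabla(w\varphi)|^2$. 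This cancellation, not the shortness of $T_1$, is what removes $Z$ from the inequality; the shortness of $T_1$ is only used to neutralize the remaining zeroth-order factor $\ure^{K_5 m^2 t}$.

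A secondary but real issue concerns your handling of the cutoff term $\intom|\nabla\zeta|^2 w^2 \le C m^{2/\chi}$. Even if the $Z$-absorption were fixed, this quantity feeds into both the forcing and (through the Sobolev step) the source, and after Gronwall over $[0,m^{-2}]$ you are left with a bound of order $m^{2/\chi}$, which is unbounded in $m$ (and particularly badly so for small $\chi$); the phrase ``reabsorbed via the one-dimensional Sobolev step'' is not a mechanism. The paper instead normalizes the cutoff so that $|\nabla\varphi|\le K_1\varphi^{1-\delta/2}$ with $\delta = \min\{\chi,\tfrac12\}$, whence $\intom w^2|\nabla\varphi|^2 \le K_1^2(2\lambda)^{-1}m\|w\varphi\|_\infty^{2-\delta} \le m\|w\varphi\|_\infty^2 + K_4 m$; the surviving forcing is $O(m)$ and integrates against $T_1\le m^{-2}$ to an $m$-independent constant. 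Finally, your closing step ``passing back through the 1D Sobolev embedding'' to recover a pointwise bound on $w(\tfrac R2,\cdot)$ would require a \emph{pointwise-in-time} bound on $Z(t)$, which Gronwall does not provide; the paper instead recovers the pointwise bound directly from the $L^2$-bound on $w\varphi$ together with radial monotonicity via a short contradiction argument (if $w(\tfrac R2,t')>C$ then $w\ge C$ on $[\tfrac R3,\tfrac R2]$, making $\intom(w\varphi)^2$ too large). Your preliminary observation $w(\tfrac R4,t)\le c_1 m^{1/\chi}$ is correct, but it is not needed once the two refinements above are in place.
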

\begin{proof}
  We fix $\delta \defs \min\{\chi, \frac12\}$,
  $r_0 \defs \frac R4$, $r_1 \defs \frac R3$
  as well as $\varphi \in C^\infty([0, R])$ with $\varphi = 0$ in $[0, r_0]$ and $\varphi = 1$ in $[r_1, R]$.
  By potentially switching to $\varphi^\frac{2}{\delta}$ as a measure to ensure that $\varphi$ to the power $\frac{\delta}{2}$ remains smooth, we may without loss of generality assume that
  there is $K_1 \gt 0$ such that
  \begin{align}\label{eq:w_pw_local_bdd:varphi}
    |\nabla \varphi| \le K_1 \varphi^{1-\frac{\delta}{2}}
    \qquad \text{ in $(0, R)$}.
  \end{align}
  Moreover, as $\frac{\frac12 - \frac1\infty}{\frac12 - \frac12 + \frac11} = \frac12 \in (0, 1)$,
  the Gagliardo--Nirenberg inequality asserts that there is $K_2 \gt 0$ with the property that
  \begin{align*}
          \|\psi\|_{\leb[(0,R)]\infty}^2
    &\le  K_2 \|\psi_r\|_{\leb[(0,R)]2} \|\psi\|_{\leb[(0,R)]2}
          + K_2 \|\psi\|_{\leb[(0,R)]2}^2
    \qquad \text{for all $\psi \in \sob[(0,R)]12$},
  \end{align*}
  which when combined with Young's inequality entails that
  \begin{align}\label{eq:w_pw_local_bdd:ehrling}
        2 m \|\psi\|_{\leb[(0,R)]\infty}^2
    \le \frac{\omega_{n-1}}{r_0^{1-n}} \|\psi_r\|_{\leb[(0,R)]2}^2 + K_3 m^2 \|\psi\|_{\leb[(0,R)]2}^2
  \end{align}
  for all $\psi \in \sob[(0,R)]12$ and $m \ge 1$,
  wherein $K_3 \defs \frac{K_2^2 r_0^{1-n}}{\omega_{n-1}} + 2K_2$.
  We now fix first $m \ge \max\{m_1, 1\}$ as well as $M \gt 0$
  and then $w_0$ as well as $w \defs w(\cdot, \cdot; w_0, m)$ as given by Lemma~\ref{lm:w0} and Lemma~\ref{lm:ex_w}, respectively.
  Testing the first equation in \eqref{prob:w} with $w \varphi^2$ gives
  \begin{align*}
          \frac12 \ddt \intom (w \varphi)^2
          + \intom \nabla w \cdot \nabla (w \varphi^2)
    &=    m \left( \intom w^\chi \right)^{-1} \intom w^{\chi+2} \varphi^2
     \le  m \|w \varphi\|_{\leb\infty}^2
    \qquad \text{in $(0, \tmax(w_0, m))$}.
  \end{align*}
  Since 
  \begin{align*}
        \nabla w \cdot \nabla (w \varphi^2) 
    &=  \varphi \nabla w \cdot \nabla (w \varphi)
        + w \varphi \nabla w \cdot \nabla \varphi \\
    &=  (\varphi \nabla w + w \nabla \varphi) \cdot \nabla (w \varphi)
        + (w \varphi \nabla w - w \nabla (w \varphi)) \cdot \nabla \varphi \\
    &=  |\nabla (w \varphi)|^2
        - w^2 |\nabla \varphi|^2
    \qquad \text{in $\Omega \times (0, \tmax(w_0, m))$},
  \end{align*}
  this implies
  \begin{align}\label{eq:w_pw_local_bdd:ddt_w2}
          \frac12 \ddt \intom w^2 \varphi^2
          + \intom |\nabla (w \varphi)|^2
    &\le  m \|w \varphi\|_{\leb\infty}^2
          + \intom w^2 |\nabla \varphi|^2
    \qquad \text{in $(0, \tmax(w_0, m))$}.
  \end{align}

  Turning our attention to the second term on the right-hand side herein first,
  we note that due to $w \ge 1$ and by the definition of $T_1(w_0, m)$,
  \begin{align*}
        \intom w^\delta(\cdot, t)
    \le \intom w^\chi(\cdot, t)
    =     m k^{-1}(t)
    \le m (2 \lambda)^{-1}
    \qquad \text{for all $t \in (0, T_1(w_0, m))$}.
  \end{align*}
  Combined with \eqref{eq:w_pw_local_bdd:varphi} and Young's inequality, this gives
  \begin{align}\label{eq:w_pw_local_bdd:rhs_1}
          \intom w^2 |\nabla \varphi|^2
    &\le  K_1^2 \intom w^\delta w^{2-\delta} \varphi^{2-\delta} \notag \\
    &\le  K_1^2  (2 \lambda)^{-1} m \|w \varphi\|_{\leb\infty}^{2-\delta} \notag \\
    &\le  m \|w \varphi\|_{\leb\infty}^2 + K_4 m
    \qquad \text{in $(0, T_1(w_0))$}
  \end{align}
  for $K_4 \defs (K_1^2 (2 \lambda)^{-1})^{\frac 2\delta}$.

  Regarding the first term on the right-hand side in both \eqref{eq:w_pw_local_bdd:ddt_w2} and \eqref{eq:w_pw_local_bdd:rhs_1},
  we make use of \eqref{eq:w_pw_local_bdd:ehrling} and the fact that $\varphi \equiv 0$ in $(0, r_0)$ to obtain
  \begin{align}\label{eq:w_pw_local_bdd:rhs_2}
          2 m \|w \varphi\|_{\leb[(0,R)]\infty}^2
    &\le  \frac{\omega_{n-1}}{r_0^{1-n}} \int_0^R (w \varphi)_r^2 + K_3 m^2 \int_0^R (w \varphi)^2 \notag \\
    &\le  \omega_{n-1} \int_0^R r^{n-1} (w \varphi)_r^2 + K_3 r_0^{1-n} m^2 \int_0^R  r^{n-1} (w \varphi)^2 \notag \\
    &=    \intom |\nabla (w \varphi)|^2 + K_5 m^2 \intom (w \varphi)^2
    \qquad \text{in $(0, \tmax(w_0))$},
  \end{align}
  where $K_5 \defs \frac{K_3 r_0^{1-n}}{\omega_{n-1}}$.

  In conjunction, \eqref{eq:w_pw_local_bdd:ddt_w2}, \eqref{eq:w_pw_local_bdd:rhs_1} and \eqref{eq:w_pw_local_bdd:rhs_2} imply
  \begin{align*}
          \ddt \intom (w \varphi)^2
    &\le  2K_5 m^2 \intom (w \varphi)^2 + 2K_4 m
    \qquad \text{in $(0, T_1(w_0, m))$},
  \end{align*}
  whence, by the variation-of-constants formula, \eqref{eq:w0:w0_W}
  and the facts that $\|\varphi\|_{L^\infty(\Omega)} \le 1$ and $T_1(w_0, m) \le T_0(w_0, m) \le m^{-2} \le m^{-1}$,
  \begin{align}\label{eq:w_pw_local_bdd:w_varphi_l2}
          \intom (w(\cdot, t) \varphi)^2
    &\le  \ure^{2 K_5 m^2 t} \left(
            \intom (w_0 \varphi)^2
            + 2 K_4 m \int_0^t \ure^{-2 K_5 m^2 s} \ds
          \right) \notag \\
    &\le  \ure^{2 K_5} \left(
            \|W\|_{L^2(\Omega \setminus \ol B_{r_0}(0)}^2
            + 2 K_4
          \right)
    \sfed K_6
    \qquad \text{for all $t \in (0, T_1(w_0, m))$}.
  \end{align}

  We now claim that this implies \eqref{eq:w_pw_local_bdd:statement} for $C \defs \sqrt\frac{12 \cdot 3^{n-1}K_6}{R^n \omega_{n-1}}$.
  Indeed, suppose that there are $r' \in [\frac R2, R)$ and $t' \in (0, T_1(w_0, m))$ with $w(r', t') \gt C$.
  As $w$ is radially decreasing by Lemma~\ref{lm:w_r_bdd},
  this would in particular imply $w(r, t') \ge C$ for all $r \in [0, \frac R2]$.
  But then
  \begin{align*}
          \intom (w(x, t') \varphi)^2 \dx
    &\ge  \omega_{n-1} \int_{\frac R3}^{\frac R2} \rho^{n-1} w^2(\rho, t') \drho
     \ge  \frac{C^2 R^{n-1} \omega_{n-1}}{3^{n-1}} \int_{\frac R3}^{\frac R2} 1 \drho
     =    \frac{C^2 R^n \omega_{n-1}}{3^{n-1}\cdot 6}
     =    2K_6 
     \gt  K_6,
  \end{align*}
  contradicting \eqref{eq:w_pw_local_bdd:w_varphi_l2}.
\end{proof}

With Lemma~\ref{lm:w_r_bdd} and Lemma~\ref{lm:w_pw_local_bdd} at hand,
we are now able to make use of a comparison principle argument allowing us to estimate $w$ in $\Ombar \times [0, T_1(w_0, m))$,
independently of $m$ and $M$.

\begin{lemma}\label{lm:w_pw_global_bdd}
  Let $q \in (1, \chi+1)$.
  Then there is $C \gt 0$ such that for any $m \ge \max\{m_1, 1\}$ and $M \gt 0$,
  the solution $w$ of \eqref{prob:w} given by Lemma~\ref{lm:ex_w} emanating from the initial data $w_0$ constructed in Lemma~\ref{lm:w0}
  satisfies 
  \begin{align}\label{eq:w_pw_global_bdd:statement}
    w(r, t) \le C r^{-\frac{2}{q-1}}
    \qquad \text{for all $r \in (0, R)$ and $t \in (0, T_1(w_0, m))$}.
  \end{align}
\end{lemma}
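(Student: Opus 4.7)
The approach is a parabolic maximum principle argument applied to the auxiliary radial function
\[
  J(r, t) \defs w_r(r, t) + \eta r w^q(r, t),
\]
with $\eta \gt 0$ to be fixed, independently of $m$ and $M$. The plan is to show that $J \le 0$ on $[0, \tfrac R2] \times [0, T_1(w_0, m))$. Once this is accomplished, the pointwise inequality $-w^{-q} w_r \ge \eta r$ can be integrated in the radial variable on $(0, r)$; using that $w(0,t) \ge 1$ so that $w(0,t)^{-(q-1)} \ge 0$ is finite, this yields
\[
  w(r, t) \le \left( \tfrac{2}{\eta(q-1)} \right)^\frac{1}{q-1} r^{-\frac{2}{q-1}} \qquad \text{for all $(r, t) \in (0, \tfrac R2] \times (0, T_1(w_0, m))$.}
\]
For $r \in [\tfrac R2, R)$, the estimate \eqref{eq:w_pw_global_bdd:statement} will follow directly from Lemma~\ref{lm:w_pw_local_bdd} combined with the fact that $r \mapsto r^{-2/(q-1)}$ is bounded away from zero on $[\tfrac R2, R]$.

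The core task is therefore to establish $J \le 0$. Differentiating the first equation of \eqref{prob:w} in $r$ (noting $\Delta u = u_{rr} + \tfrac{n-1}{r} u_r$ for radial $u$) and substituting $w_r = J - \eta r w^q$, a direct if lengthy computation will yield an identity of the form
\[
  J_t - \Delta J = c(r,t)\, J + \mathcal{R}(r, t), \qquad c(r,t) \defs (\chi+1) k(t; w_0, m) w^\chi - 2\eta q w^{q-1} - \tfrac{n-1}{r^2},
\]
with remainder
\[
  \mathcal{R}(r,t) = \eta r w^{2q-1} \left[(q-\chi-1)\, k(t; w_0, m)\, w^{\chi-q+1} + 2\eta q \right] - \eta r q(q-1) w^{q-2} w_r^2.
\]
The last summand of $\mathcal{R}$ is manifestly nonpositive since $q \gt 1$. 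Since $k(\cdot; w_0, m) \ge 2\lambda$ on $(0, T_1(w_0, m))$ by the very definition of $T_1$, and since $w \ge 1$ and $\chi + 1 - q \gt 0$, the bracket is bounded above by $2\lambda(q - \chi - 1) + 2\eta q$, which is nonpositive as soon as $\eta \le \lambda(\chi+1-q)/q$. Hence for such $\eta$ we obtain $J_t - \Delta J \le c(r,t)\, J$ on $(0, \tfrac R2) \times (0, T_1(w_0, m))$.

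It remains to verify the parabolic boundary data for $J$. Radial smoothness of $w$ supplies $J(0, t) = 0$. For the initial condition, property \eqref{eq:w0:comp} together with $w_0 \ge 1$ and $q \lt \chi+1$ (so that $w_0^q \le w_0^{\chi+1}$) yields $J(\cdot, 0) \le 0$ as soon as $\eta \le \mu$. Finally, Lemma~\ref{lm:w_r_bdd} and Lemma~\ref{lm:w_pw_local_bdd} furnish constants $c_1, c_2 \gt 0$, independent of $m$ and $M$, with $w_r(\tfrac R2, t) \le -c_1$ and $w(\tfrac R2, t) \le c_2$ on $(0, T_1(w_0, m))$, so that further requiring $\eta \le 2c_1/(R c_2^q)$ ensures $J(\tfrac R2, t) \le 0$. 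Fixing $\eta \gt 0$ to simultaneously satisfy all three smallness conditions, a standard parabolic maximum principle then yields $J \le 0$ on $[0, \tfrac R2] \times [0, T_1(w_0, m))$, and the integration argument described above completes the proof. The principal obstacle is that the zero-order coefficient $c(r,t)$ is a priori unbounded---precisely because a pointwise bound on $w$ is what we are trying to prove---; this circularity is circumvented by running the maximum principle on the compact parabolic cylinder $[0, \tfrac R2] \times [0, T]$ for each $T \lt T_1(w_0, m)$ separately, where $w$ is bounded by continuity, before letting $T \nearrow T_1(w_0, m)$.
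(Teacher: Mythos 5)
Your proposal is correct and follows essentially the same route as the paper: defining $J = w_r + \eta r w^q$, deriving a parabolic inequality of the form $J_t - \Delta J \le c(r,t)\,J$ with $c$ and the remainder matching the paper's $a$ and $b$, choosing $\eta = \min\{\mu,\, 2K_2/(RK_1^q),\, (\chi+1-q)\lambda/q\}$ to secure the sign conditions, checking the parabolic boundary data via Lemmas~\ref{lm:w_r_bdd} and~\ref{lm:w_pw_local_bdd} and property \eqref{eq:w0:comp}, applying the maximum principle, and integrating $-w^{-q}w_r \ge \eta r$. Your remark that the maximum principle should be run on compact sub-cylinders $[0,\tfrac R2]\times[0,T]$ with $T < T_1(w_0,m)$ to avoid worrying about a priori unbounded coefficients is a correct refinement of a point the paper leaves implicit.
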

\begin{proof}
  The proof is similar to \cite[Lemma~44.12]{QuittnerSoupletSuperlinearParabolicProblems2019}.
  However, due to the importance of this lemma and as some modifications are necessary,
  we choose to nonetheless give a full proof here.

  We denote by $K_1$ and $K_2$ the constants given by Lemma~\ref{lm:w_pw_local_bdd} and Lemma~\ref{lm:w_r_bdd}, respectively,
  and set
  \begin{align}\label{eq:w_pw_global_bdd:def_eta}
    \eta \defs \min\left\{\mu, \frac{2K_2}{R K_1^q}, \frac{(\chi+1-q) \lambda}{q}\right\}.
  \end{align}

  We now fix $m \ge \max\{m_1, 1\}$, $M \gt 0$, $w_0$ as in Lemma~\ref{lm:w0} and the solution $w$ of \eqref{prob:w} given by Lemma~\ref{lm:ex_w}.
  The claim then follows once we have shown that
  \begin{align}\label{eq:w_pw_global_bdd:J}
    J \defs w_r + \eta r w^q \le 0
    \qquad \text{in $(0, \tfrac R2) \times (0, T_1(w_0, m))$}.
  \end{align}
  Indeed, \eqref{eq:w_pw_global_bdd:J} implies $(w^{1-q}(r, t))_r \ge (q-1) \eta r$
  and thus $w(r, t) \le (\frac{1}{2}(q-1) \eta)^{-\frac{1}{q-1}} r^{-\frac{2}{q-1}}$
  for all $(r, t) \in (0, \frac R2) \times (0, T_1(w_0, m))$.
  Since $w$ is radially decreasing, \eqref{eq:w_pw_global_bdd:statement} is
  a consequence of the above for some $C \gt 0$ only depending on $\eta$, $q$ and $R$.
  
  In order to prove \eqref{eq:w_pw_global_bdd:J},
  we first calculate
  \begin{align*}
          J_t - J_{rr}
    &=    (w_t - w_{rr})_r
          + \eta \left(r (w^q)_t - (r w^q)_{rr}\right) \\
    &=    \left(w_t - w_{rr} - \frac{n-1}{r} w_r \right)_r - \frac{n-1}{r^2} w_r
          + \frac{n-1}{r} w_{rr} \\
    &\pe  + \eta \left(q r w^{q-1} (w_t - w_{rr}) - 2q w^{q-1} w_r - q(q-1) r w^{q-2} (w_r)^2\right) \\
    &\le  \left(k(t; w_0, m) (\chi+1) w^\chi - \frac{n-1}{r^2} - 2q \eta w^{q-1} \right) w_r \\
    &\pe  + \frac{n-1}{r} \left(J - \eta r w^q \right)_r
          + q \eta r w^{q-1} \left(\frac{n-1}{r} w_r + k(t; w_0, m) w^{\chi+1} \right) \\
    &=    \left(k(t; w_0, m) (\chi+1) w^\chi - \frac{n-1}{r^2} - 2 q \eta w^{q-1} \right) \left( J - \eta r w^q \right) \\
    &\pe  + \frac{n-1}{r} J_r - \frac{n-1}{r} \eta w^q
          + q \eta r k(t; w_0, m) w^{q+\chi} \\ 
    &= a(r, t) J + \frac{n-1}{r} J_r + b(r, t)
    \qquad \text{in $(0, R) \times (0, \tmax(w_0, m))$},
  \end{align*}
  where
  \begin{align*}
    a(r, t) &\defs k(t; w_0, m) (\chi+1) w^\chi(r, t) -\frac{n-1}{r^2} - 2q \eta w^{q-1}(r, t)
  \intertext{and}
    b(r, t) &\defs \eta r w^{q+\chi}(r, t) [ 2q \eta w^{-(\chi+1-q)}(r, t) - (\chi+1-q) k(t; w_0, m) ]
  \end{align*}
  for $(r, t) \in (0, R) \times (0, \tmax(w_0, m))$.
  By the definitions of $T_1(w_0, m)$ and $k(\cdot; w_0, m)$, as $w \ge 1$ and because of $\eta \le \frac{(\chi+1-q)\lambda}{q}$,
  we can estimate
  \begin{align*}
          b(r, t)
    &\le  2\eta r w^{q+\chi} [q \eta - (\chi+1-q) \lambda ]
    \le   0
    \qquad \text{for all $(r, t) \in (0, R) \times (0, T_1(w_0, m))$}.
  \end{align*}
  As moreover $J(\cdot, 0) = w_{0r} + \eta r w_0^q \le w_{0 r} + \mu r w_0^{\chi + 1} \le 0$ in $(0, \frac R2)$ due to \eqref{eq:w_pw_global_bdd:def_eta}, nonnegativity of $w-1$ and \eqref{eq:w0:comp}
  and $J(0, \cdot) = w_r(0, \cdot) \le 0$ by Lemma~\ref{lm:w_r_bdd} as well as
  $J(\frac R2, \cdot) \le -K_2 + \eta \frac R2 K_1^q \le 0$ by Lemma~\ref{lm:w_r_bdd} and Lemma~\ref{lm:w_pw_local_bdd}
  in $(0, T_1(w_0, m))$,
  the maximum principle asserts that indeed \eqref{eq:w_pw_global_bdd:J} holds.
\end{proof}

Next, we show that for sufficiently large $m$, Lemma~\ref{lm:w_pw_global_bdd} entails $T_1(w_0, m) = T_0(w_0, m)$, which means that the problematic growth dampening term in (\ref{prob:w}) is in fact uniformly bounded on the whole time interval $(0,T_0(w_0, m))$ in a fashion independent of $M$.

\begin{lemma}\label{lm:t1_eq_t0}
  There is $m_0 \ge \max\{m_1, 1\}$ such that for all $m \ge m_0$, $M \gt 0$ and $w_0$ as given by Lemma~\ref{lm:w0},
  we have $T_1(w_0, m) = T_0(w_0, m)$. 
\end{lemma}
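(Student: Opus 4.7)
The plan is to argue by contradiction. Suppose that for some $m \ge \max\{m_1, 1\}$, $M > 0$ and $w_0$ as in Lemma~\ref{lm:w0}, we had $T_1(w_0, m) < T_0(w_0, m)$. Then in particular $T_1(w_0, m) < \tmax(w_0, m)$, so $k(\cdot; w_0, m)$ is continuous at $T_1(w_0, m)$, and the maximality built into the definition of $T_1(w_0, m)$ forces $k(T_1(w_0, m); w_0, m) = 2\lambda$ (since a strict inequality $> 2\lambda$ would, by continuity, permit enlarging $T_1$). It thus suffices to show that $k(T_1(w_0, m); w_0, m) > 2\lambda$ for $m$ sufficiently large, independently of $M$.

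To achieve this, I would derive a uniform-in-$(m, M)$ upper bound for $\intom w^\chi(\cdot, t)$ on $(0, T_1(w_0, m))$ directly from Lemma~\ref{lm:w_pw_global_bdd}. Since $n \ge 3$, one has $1 + \tfrac{2\chi}{n} < \chi + 1$, so I fix any $q \in (1 + \tfrac{2\chi}{n}, \chi + 1)$, which ensures $\tfrac{2\chi}{q-1} < n$. By Lemma~\ref{lm:w_pw_global_bdd} applied with this $q$, there is $C_q > 0$, independent of $m$ and $M$, with $w(r, t) \le C_q r^{-\frac{2}{q-1}}$ for all $r \in (0, R)$ and $t \in (0, T_1(w_0, m))$. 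Passing to radial coordinates, this yields
\[
  \intom w^\chi(\cdot, t)
  \le \omega_{n-1} C_q^\chi \int_0^R r^{n-1-\frac{2\chi}{q-1}} \dr
  \sfed C_1 < \infty
\]
for all $t \in (0, T_1(w_0, m))$, and the bound extends to $t = T_1(w_0, m)$ by continuity of $w$ on $\Ombar \times [0, T_1(w_0, m)]$.

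Consequently, $k(T_1(w_0, m); w_0, m) \ge m/C_1$. Setting $m_0 \defs \max\{m_1, 1, 4\lambda C_1\}$, for any $m \ge m_0$ one obtains $k(T_1(w_0, m); w_0, m) \ge 4\lambda > 2\lambda$, contradicting $k(T_1(w_0, m); w_0, m) = 2\lambda$. Hence $T_1(w_0, m) = T_0(w_0, m)$, as desired. The only delicate point in this scheme is the selection of $q$: we need the pointwise bound furnished by Lemma~\ref{lm:w_pw_global_bdd} to survive raising to the $\chi$-th power and integrating over $\Omega$, which pins $q$ into $(1 + \tfrac{2\chi}{n}, \chi + 1)$. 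The existence of such $q$ is exactly where the assumption $n \ge 3$ enters and mirrors the integrability $|x|^{-2/\chi} \in \leb\chi$ already exploited in Lemma~\ref{lm:w0}; beyond this, the argument is a standard continuity/maximality argument at the endpoint $T_1(w_0, m)$.
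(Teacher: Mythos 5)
Your proof is correct and follows essentially the same route as the paper: both choose $q \in (1 + \tfrac{2\chi}{n}, \chi+1)$ so that Lemma~\ref{lm:w_pw_global_bdd} yields an $(m,M)$-independent bound $C_1$ on $\intom w^\chi$ over $(0, T_1(w_0,m))$, and then set $m_0 = \max\{m_1, 1, 4\lambda C_1\}$ to force $k \ge 4\lambda$ there. The only cosmetic difference is that you spell out the continuity/maximality step at $t = T_1$ as an explicit contradiction, whereas the paper leaves it implicit in the phrase ``due to the definition of $T_1(w_0,m)$''; the substance is identical.
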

\begin{proof}
  Since $n \ge 3$, we may fix $q \in (\frac{2 \chi}{n} + 1, \chi + 1)$.
  According to Lemma~\ref{lm:w_pw_global_bdd}, there is $K_1 \gt 0$ not depending on $m$ and $M$ such that
  the solution $w \defs w(\cdot, \cdot; w_0, m)$ given by Lemma~\ref{lm:ex_w} for $w_0$ as in Lemma~\ref{lm:w0} fulfills
  \begin{align*}
        \intom w^\chi(\cdot, t)
    \le K_1^\chi \omega_{n-1} \int_0^R \rho^{n-1} \rho^{-\frac{2 \chi}{q-1}} \drho
    \sfed K_2
    \qquad \text{in $(0, T_1(w_0, m))$}.
  \end{align*}
  As $n-1 - \frac{2 \chi}{q-1} \gt -1$, $K_2$ is finite so that $m_0 \defs \max\{4 K_2 \lambda, m_1, 1\}$ is finite (and independent of $m$ and $M$) as well.
  Thus, $k(t; w_0, m) \ge K_2^{-1} m \ge 4\lambda$ in $(0, T_1(w_0, m))$ provided $m \ge m_0$,
  that is, $T_1(w_0, m) = T_0(w_0, m)$ for these $m$ due to the definition of $T_1(w_0, m)$.
\end{proof}

Finally using the above uniform (in terms of $M$) bound on the dampening influence $k(t; w_0, m)$ in our considered problem, we will now close the central argument of this section by showing that as we increase $M$ in our initial data construction, the associated solution to (\ref{prob:w}) blows up earlier than any given time $T > 0$. As already alluded to before, this is done by comparison with the system discussed in Lemma~\ref{lm:z_blow_up}. 
\begin{proof}[Proof of Lemma~\ref{lm:w_arbitrarily_earily_blowup}] 
	Let $m_0$ be as in Lemma~\ref{lm:t1_eq_t0} and $m \ge m_0$. We then fix some $T > 0$, without loss of generality assuming that $T < m^{-2}$, and initial data $w_0$ as constructed in Lemma~\ref{lm:w0} with $M > (\lambda \chi T)^{-\frac{1}{\chi}}$.

	According to the definition of $T_1(w_0, m)$ and due to Lemma~\ref{lm:t1_eq_t0},
	the unique solution $w \defs w(\cdot, \cdot; w_0, m)$ of \eqref{prob:w} given by Lemma~\ref{lm:ex_w} with initial data $w_0$ 
	is a supersolution of \eqref{eq:z_blow_up:z_eq} in $\Ombar \times [0, T_0(w_0, m))$.
 
	Let $T_z(w_0)$ and $z$ be as given by Lemma~\ref{lm:z_blow_up}.
	For the sake of contradiction, we suppose that $\tmax(w_0, m) \ge T$.
	Then $T_0(w_0, m) \ge T > \frac{M^{-\chi}}{\lambda \chi} \ge T_z$
	so that the comparison principle would assert $w \ge z$ in $\Ombar \times [0, T_z)$,
	which due to Lemma~\ref{lm:z_blow_up} would imply
	$\limsup_{t \nea T_z} \|w(\cdot, t)\|_{\leb\infty} \ge \limsup_{t \nea T_z} \|z(\cdot, t)\|_{\leb\infty} = \infty$
	and hence $\tmax(w_0, m) \le T_z(w_0) < T$.
\end{proof}

\section{Proof of Theorem~\ref{th:main}}
We have now proven in Section~\ref{sec:reduction} that, if the (unique) maximally extended solutions to (\ref{problem}) do not exhibit the unboundedness property described in (\ref{eq:th:main}) on a time interval $(0,T)$, we can construct a classical solution to (\ref{prob:w}) on the same time interval with initial data $v_0$ and parameter $m = \int_\Omega u_0$ and in Section~\ref{sec:limit_blowup} that we can in fact construct initial data such that the (unique) solution to (\ref{prob:w}) blows up arbitrarily early given that the parameter $m$ is sufficiently large. Combining these two insight then yields the following straightforward proof by contradiction for the central result of this paper:

\begin{proof}[Proof of Theorem~\ref{th:main}]
	We begin by fixing $m_0$ as in Lemma~\ref{lm:w_arbitrarily_earily_blowup}. We further fix $T > 0$ and initial data $w_0 \sfed v_0 \in C^2(\overline{\Omega})$ such that the associated solution to (\ref{prob:w}) blows up before time $T$, again according to Lemma~\ref{lm:w_arbitrarily_earily_blowup}. We further fix some nonnegative initial data $u_0 \in C^0(\Omega)$ with mass $\int_\Omega u_0 \sfed m > m_0$. We can then use Lemma~\ref{lm:local_exist} to construct the unique maximally extended classical solutions $(\ue, \ve)$ to (\ref{problem}) associated with the above initial data for each $\eps \in (0,1)$.

	To facilitate a proof by contradiction, we now assume that the solutions fixed above do not have the property (\ref{eq:th:main}) from Theorem~\ref{th:main}. Given the blow-up criterion (\ref{eq:blowup_criterion}), this means that there must exist $\eps^\star \in (0,1)$ such that, for all $\eps \in (0,\eps^\star)$, the solutions $(\ue, \ve)$ must exist on the time interval $(0,T)$ as well as be uniformly bounded in some $L^p(\Omega)$ with $p > \frac{n}{2}$ on said time interval. But this is, of course, exactly the property needed to apply Lemma~\ref{lm:limit_reduction} to construct a classical solution to (\ref{prob:w}) on $(0,T)$ with initial data $w_0 = v_0$. Given that such a solution is unique according to Lemma~\ref{lm:ex_w} and, by our choice of $v_0$ and $m_0$, we further know that the constructed solution must in fact blow up at some time before $T$, this leads to the anticipated contradiction. As such, the solutions $(\ve, \ue)$ fixed above must have already satisfied the property (\ref{eq:th:main}), completing the proof.
\end{proof}

\section*{Acknowledgments}
\small The first author has been partly supported by the German Academic Scholarship Foundation.
Both authors acknowledge support by the Deutsche Forschungsgemeinschaft within the project \emph{Emergence of structures and advantages in
cross-diffusion systems}, project number 411007140.


\footnotesize

\end{document}